\documentclass[11pt]{article}
\pdfoutput=1
\usepackage[latin1]{inputenc}
\usepackage{amsmath,amssymb}
\usepackage[a4paper, margin=1in]{geometry}
\usepackage{algorithmic}
\usepackage{graphicx,url,amsmath,amsthm,amsfonts,amssymb,subfigure,bbm,bm}
\usepackage{color}
\usepackage{verbatim}
\usepackage{wrapfig}
\usepackage{cite}
%%%%%%%%%%%%%%%%%%%%%%%%%%%%%%%%%%%%%%%%%%%%%%%%%%%%%%%%%%%%%%%%%%%%%%%%%%

\newcommand{\executeiffilenewer}[3]{%
\ifnum\pdfstrcmp{\pdffilemoddate{#1}}%
{\pdffilemoddate{#2}}>0%
{\immediate\write18{#3}}\fi%
}
\newcommand{%
\executeiffilenewer{fig/.svg}{fig/.pdf}%
{inkscape -z -D --file=fig/.svg %
--export-pdf=fig/.pdf --export-latex}%
\input{.pdf_tex}%
}[1]{%
\executeiffilenewer{fig/#1.svg}{fig/#1.pdf}%
{inkscape -z -D --file=fig/#1.svg %
--export-pdf=fig/#1.pdf --export-latex}%
\input{#1.pdf_tex}%
}
%%%%%%%%%%%%%%%%%%%%%%%%%%%%%%%%%%%%%%%%%%%%%%%%%%%%%%%%%%%%%%%%%%%%%%%%%%
% stolen from jeffe.sty (Jeff Erickson)

\definecolor{blueblack}{rgb}{0,0,.7}

\newcommand{\R}{\mathbb{R}}
\newcommand{\Z}{\mathbb{Z}}

\newcommand{\CNF}{\textrm{CNF}}
\newcommand{\SAT}{\textrm{SAT}}

% if needed:
%\usepackage[small,compact]{titlesec}

%%%%%%%%%%%%%%%%%%%%%%%%%%%%%%%%%%%%%%%%%%%%%%%%%%%%%%%%%%%%%%%%%%%%%%%%%%
\newtheorem{theorem}{Theorem}
\newtheorem{lemma}[theorem]{Lemma}

\newtheorem{corollary}[theorem]{Corollary}

\newtheorem{proposition}[theorem]{Proposition}
\newtheorem{problem}[theorem]{Problem}

%%%%%%%%%%%%%%%%%%%%%%%%%%%%%%%%%%%%%%%%%%%%%%%%%%%%%%%%%%%%%%%%%%%%%%%%%%
\title{On the Complexity of Immersed Normal Surfaces}
\author{Benjamin A.~Burton%                                                     
  \thanks{University of Queensland, Brisbane, Australia. email:                 
    \protect\url{bab@maths.uq.edu.au}. Supported by the Australian Research     
    Council (projects DP1094516 and DP110101104). Portions of this work         
    were done while this author was invited professor at \'Ecole normale        
    sup\'erieure.} \and \'Eric Colin de Verdi\`ere%                             
  \thanks{D\'epartement d'informatique, \'Ecole normale sup\'erieure, CNRS,     
    Paris, France. email: \protect\url{Eric.Colin.de.Verdiere@ens.fr}.          
    Portions of this work were done while this author was invited professor     
    at University of Queensland.} %                                             
  \and Arnaud de Mesmay%                                                        
  \thanks{D\'epartement d'informatique, \'Ecole normale sup\'erieure,           
    Paris, France. email: \protect\url{Arnaud.de.Mesmay@ens.fr}}}%              
\date{\today}

%%%%%%%%%%%%%%%%%%%%%%%%%%%%%%%%%%%%%%%%%%%%%%%%%%%%%%%%%%%%%%%%%%%%%%%%%%
%%%%%%%%%%%%%%%%%%%%%%%%%%%%%%%%%%%%%%%%%%%%%%%%%%%%%%%%%%%%%%%%%%%%%%%%%%
%%%%%%%%%%%%%%%%%%%%%%%%%%%%%%%%%%%%%%%%%%%%%%%%%%%%%%%%%%%%%%%%%%%%%%%%%%
\begin{document}

\maketitle\thispagestyle{empty}

\begin{abstract}
  Normal surface theory, a tool to represent surfaces in a triangulated
  3-manifold combinatorially, is ubiquitous in computational 3-manifold
  theory. In this paper, we investigate a relaxed notion of normal surfaces
  where we remove the quadrilateral conditions. This yields normal surfaces
  that are no longer embedded. We prove that it is NP-hard to decide
  whether such a surface is immersed. Our proof uses a reduction from
  Boolean constraint satisfaction problems where every variable appears in
  at most two clauses, using a classification theorem of Feder. We also
  investigate variants, and provide a polynomial-time algorithm to test for
  a local version of this problem.
\end{abstract}

%%%%%%%%%%%%%%%%%%%%%%%%%%%%%%%%%%%%%%%%%%%%%%%%%%%%%%%%%%%%%%%%%%%%%%%%%%
\section{Introduction}

The field of computational topology aims at providing computational and
efficient tools to deal with topological problems. In this theory, the
dimension of the problems considered has a very direct impact on the
complexity of the algorithms designed to solve them. Fundamental problems
tend to have polynomial-time solutions for surfaces~\cite{c-tags-12}, while
in dimensions four and larger, things easily become
undecidable~\cite[Chapter 9]{s-ctcgt-93}. In the intermediate case, most of
the problems encountered in 3-dimensional topology are decidable but
typically solved with (at least) exponential-time algorithms. Probably the
most famous of these is unknot recognition, whose complexity is in
NP~\cite{hlp-ccklp-99}, and in co-NP assuming the Generalized Riemann
Hypothesis~\cite{k-knmg-11l}, but for which no polynomial-time algorithm
nor hardness proof is known.

\paragraph{Normal surfaces.}
A standard way to study 3-manifolds is to investigate which surfaces can be
embedded in them. Normal surfaces, used in a wealth of algorithms, are
perhaps the most ubiquitous tool for this purpose. First brought to the
algorithmic light by Haken~\cite{h-tnik-61}, normal surfaces provide a
compact and structured way to analyze and enumerate the most interesting
surfaces embedded in a 3-manifold. Starting with a triangulation $T$ of a
3-manifold $M$ with $t$ tetrahedra, a \emph{normal surface} is a (possibly
disconnected) surface in~$M$ whose intersection with each tetrahedron is a
disjoint union of disks of seven simple possible types (see
Figure~\ref{F:Normaldisks} and Section~\ref{S:normsurf} below). This allows
a normal surface to be entirely described by a vector in
$\mathbb{Z}_+^{7t}$, its \emph{normal coordinates}.  Many interesting
surfaces, such as for example a Seifert disk of an unknotted closed curve,
are witnessed by a normal surface having coordinates at most exponential
in~$t$; see Hass, Lagarias, and Pippenger~\cite{hlp-ccklp-99}. This is the
starting point of many algorithms based on the enumeration of normal
surfaces, which naturally have an exponential complexity.

In addition to providing a succinct representation of embedded surfaces,
normal surfaces also possess an additional algebraic structure. Indeed, the
natural addition and scalar multiplication of vectors translate to
operations on normal surfaces, and the space of normal surfaces
in~$\mathbb{R}^{7t}$ is characterized by a set of equations: the
\emph{matching equations} and the \emph{quadrilateral conditions}. The
former are linear equations specifying the way to glue normal surfaces
locally, while the latter are non-linear and ensure that the resulting
surface is embedded. Spaces defined by linear constraints can be studied by
the means of linear programming~\cite{mg-uulp-07}, which provides a very
powerful framework to deal with decision and optimization problems. This
motivates the study of a notion of relaxed normal surfaces, where we remove
the quadrilateral conditions to obtain a simpler, polyhedral structure on
the space of normal surfaces.

\paragraph{Our results.}
As we shall see later, removing the quadrilateral conditions amounts to
removing the embeddedness of normal surfaces. Therefore, it amounts to
dealing with \emph{singular normal surfaces}. Among these, the
\emph{immersed normal surfaces} are well behaved, in the sense that while
they can self intersect, they are still 2-manifolds locally.  Moreover,
their Euler characteristic depends linearly on their normal
coordinates---this fact is crucial in algorithms that work with embedded
normal surfaces~\cite{hlp-ccklp-99}, but does not hold for all singular
normal surfaces.  By coupling singular normal surface theory with an
algorithm that efficiently separates immersed normal surfaces from the
others, we would have powerful tools at our disposal: this could lead to
efficient algorithms to find immersed low genus surfaces in
3-manifolds. Furthermore, through classical topological results like Dehn's
lemma or the loop theorem \cite{h-nb3mt-07} we would obtain embedded
surfaces, which are the key behind the unknot problem and many others.

In this article, we show some inherent limitations of this method by
proving in Theorem~\ref{T:main} that it is NP-hard to detect whether a
given set of normal coordinates can represent an immersed normal surface.  In other words, there exists no polynomial-time algorithm for
this problem unless P=NP.

Immersed normal surfaces have been studied from a mathematical point of
view by Letscher~\cite{l-insdp-97} and, in the particular case of the
figure-eight knot complement, from a computational perspective by
Aitchison, Matsumoto, and Rubinstein~\cite{amr-sf8kc-98},
Rannard~\cite{r-cinsf-99}, and Matsumoto and Rannard~\cite{mr-rpssf-00}.
In the latter papers, the authors devise and implement an algorithm to
decide whether a given set of normal coordinates can represent an immersed normal surface, and find an such an immersed surface if there is one.  While the
complexity of this algorithm is not explicitly computed, it is at least
doubly exponential in the input size.  Our main result shows that the
problem is inherently hard and that no polynomial-time solution is to be
expected.

The complexity reduction used in the proof of this theorem works by
reducing the problem to a satisfiability problem, which at first sight appears to follow a standard technique. However, the flavor of this reduction
is that in this problem, it turns out to be very hard to obtain more
than two copies of every variable. Our proof thus relies on relatively
intricate classification theorems on the complexity of Boolean
constraint satisfaction problems where every variable occurs at most
twice~\cite{df-gslov-03, f-flcs-01}.  This approach is thus, to some
extent, original, and might prove useful to obtain other hardness
proofs that could be hard to achieve by other traditional means.

Hardness results are scarce in 3-dimensional computational topology, and to
our knowledge all the other difficulty results are deduced from the Agol,
Hass, and Thurston construction~\cite{aht-cckgs-06}, except for the recent
hardness results on computing taut angle structures\cite{bs-cdtast-13} and
optimal Morse matchings~\cite{blps-pcdmt-13}. Our result displays a different
intractability aspect of this theory.  In contrast to the aforementioned
result by Agol, Hass, and Thurston~\cite{aht-cckgs-06}, we also prove that
this problem is NP-hard even when the input triangulation is a sub-manifold
of $\mathbb{R}^3$, which is for example the case for the very important
class of knot complements.

On the upper bound side, it is a natural question to wonder whether this
problem can be solved in polynomial time when the size of the triangulation
is fixed. As a partial evidence for this, the aforementioned work of
Matsumoto and Rannard~\cite{mr-rpssf-00} suggests that the problem may be
solved in polynomial time in the specific case of the figure-eight knot
complement. Although we make no progress on this question in full
generality, we show that if the triangulation just consists of tetrahedra
all sharing a single edge, the problem has a polynomial-time solution as it
can solved by computing a maximum flow.  Another view on this algorithm is
that it can certify \emph{local} immersibility, where the locality means
that it can only check whether there is an obstruction to being immersed
around every edge, but it is not global since ensuring immersibility as
some point might force a branch point at some other point for the
triangulation.

This paper is organized as follows. We start by introducing the main
concepts of $3$-manifold topology, normal surface theory, and Boolean
constraint satisfaction in Section~\ref{S:prelim}. In Section~\ref{S:thm},
we describe our reduction and prove the main
theorem. Section~\ref{S:variants} explores some variants of the
immersibility problem that remain NP-hard. Finally, in
Section~\ref{S:local}, we provide an algorithm to test immersibility in the
restricted case of tetrahedra glued around a single edge.

%%%%%%%%%%%%%%%%%%%%%%%%%%%%%%%%%%%%%%%%%%%%%%%%%%%%%%%%%%%%%%%%%%%%%%%%%%
\section{Preliminaries}\label{S:prelim}

\subsection{Background on normal surface theory}\label{S:normsurf}

For completeness, we first give a (rather standard) background on normal
surface theory; see e.g., Hass, Lagarias, and Pippenger~\cite{hlp-ccklp-99}
or Gordon~\cite{g-tns-01} for more details.  A \emph{3-manifold with boundary}
is a compact topological space such that every point is locally
homeomorphic to $\R^3$ or to the closed half space $\{x,y,z \mid x,y,z \in
\R, z \geq 0\}$. We will describe 3-manifolds using \emph{triangulations}:
A triangulation~$T$ is a topological space obtained from a disjoint set of
$t$~tetrahedra $T=(T_1,\ldots,T_t)$ by (combinatorially) gluing some pairs
of two-dimensional faces of these tetrahedra; a gluing between two faces is
specified by a bijection from the vertex set of the first face to the
vertex set of the second face.  As a result of these gluing, edges and
vertices of tetrahedra are also identified; it is also allowed to glue two
zero-, one-, or two-dimensional faces of the same tetrahedron.  A
\emph{face} of a triangulation~$T$ is a two-dimensional simplex, incident
to one or two tetrahedra in~$T$.

% The \emph{link} of a vertex~$x$ in a triangulation $T$ is the set of
% faces $f$ such that the tetrahedron corresponding to $f$ and $x$ is
% included in $T$.

The following lemma is standard~\cite{m-as3mtth-52}; we include it for
further reference:
\begin{lemma}\label{L:manifold}
  A triangulation is a 3-manifold (possibly with boundary) if and only if:
  \begin{enumerate}
  \item each vertex has a neighborhood homeomorphic to~$\R^3$ or to the
    closed half-space;
  \item after the gluings, no edge is identified to itself in the reverse
    orientation.
\end{enumerate}
\end{lemma}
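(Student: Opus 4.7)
The plan is to verify the manifold condition pointwise, stratifying the triangulation by the dimension of the smallest simplex containing a given point: the interior of a tetrahedron, the interior of a 2-face, the interior of an edge, and a vertex. For each stratum I would exhibit a neighborhood homeomorphic either to $\R^3$ or to the closed half-space, using hypotheses 1 and 2 only where unavoidable.

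For the forward direction, if the triangulation is a 3-manifold then condition 1 is immediate. For condition 2, suppose some edge $e$ were identified with itself in the reverse orientation. Then the transverse cross-section at an interior point of $e$, obtained by gluing a thin wedge of a single tetrahedron along the reversing identification, is a M\"obius band (possibly with part of its boundary removed); its core has no neighborhood homeomorphic to $\R^3$ or to a closed half-space, a contradiction.

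For the reverse direction, a point in the interior of a tetrahedron trivially has an $\R^3$-neighborhood, a point in the interior of a 2-face has a half-ball neighborhood if the face is unglued and a full ball neighborhood otherwise, and vertices are handled directly by hypothesis 1. The one substantive case is a point $p$ in the interior of an edge $e$. I would analyze the transverse cross-section: take a small wedge from each tetrahedron incident to $e$ and glue these wedges along the 2-face identifications that pair up faces containing $e$. Since each tetrahedron containing $e$ has exactly two 2-faces that contain $e$, and each such 2-face is glued to at most one other, traversing these adjacencies produces a well-defined cyclic or linear sequence of wedges around $e$. If the sequence is linear, bounded by unglued 2-faces on both ends, the cross-section is a half-disk, giving $p$ a half-space neighborhood. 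If the sequence closes up, hypothesis 2 forbids the orientation-reversing return, so the cross-section is a genuine 2-disk and $p$ has an $\R^3$-neighborhood.

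The main obstacle is making the edge analysis precise: one must check that the face-pairing structure of a triangulation forces the wedges around $e$ into a genuine cycle or arc (with no branching), and then identify hypothesis 2 as exactly the condition that rules out a M\"obius-type identification in the cyclic case. Once the cross-section around an edge is set up cleanly, the remaining verifications are elementary local homeomorphisms.
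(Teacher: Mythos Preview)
Your proposal is essentially correct and, in fact, goes well beyond what the paper does. The paper does not prove Lemma~\ref{L:manifold} at all: it simply declares the result ``standard,'' cites Moise, and adds the parenthetical remark that the two conditions ``are obviously necessary, and turn out to be sufficient.'' Your stratified, point-by-point verification is exactly the standard argument one would give if pressed, and the paper effectively delegates it to the reference.

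Two small sharpenings are worth making. First, in the forward direction your ``M\"obius band'' picture is morally right but slightly off as stated: the clean invariant is that if an edge is identified to itself with reversed orientation, then the \emph{link} of its midpoint in the quotient is a disk with antipodal boundary identification, i.e.\ $\mathbb{RP}^2$, rather than $S^2$ or $D^2$; that is the precise obstruction to a manifold neighborhood. The M\"obius band you describe appears as the $r>0$ level sets of a tubular neighborhood, but the failure point is the midpoint on the core. Second, your phrasing ``a thin wedge of a single tetrahedron'' suggests the reversal occurs via one face-gluing in one tetrahedron; in general the reversal may arise from a chain of gluings through several tetrahedra, and the argument should be stated for the full cycle of wedges around~$e$. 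Neither point threatens the argument --- they are matters of wording --- and your identification of the edge case as the only substantive one, together with the ``no branching because each face is paired at most once'' observation, is exactly right.
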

(These two conditions are obviously necessary, and turn out to be
sufficient to ensure that every point of the triangulation has a manifold
neighborhood.)

Henceforth, $T$ denotes a triangulation of a 3-manifold~$M$.  A \emph{normal
  isotopy} is an ambient isotopy of $M$ that is fixed on the $2$-skeleton
of $T$.  A \emph{normal surface} in~$T$ is a properly embedded surface
in~$T$ that meets each tetrahedron in a (possibly empty) disjoint
collection of \emph{normal disks}, each of which being either a
\emph{triangle} (separating one vertex of the tetrahedron from the other
three) or a \emph{quadrilateral} (separating two vertices from the other
two).  In each tetrahedron, there are $4$ possible types of triangles and
$3$ possible types of quadrilaterals, pictured in
Figure~\ref{F:Normaldisks}.  The intersection of an embedded normal surface
with a face of the triangulation gives rise to a \emph{normal arc}.  There
are $3$ possible types of normal arcs within each face: the type of a
normal arc is defined according to which vertex of the face it separates
from the other two.

\begin{figure}[htb]
\centering
\def\svgwidth{13cm}
\executeiffilenewer{fig/Normaldisks.svg}{fig/Normaldisks.pdf}%
{inkscape -z -D --file=fig/Normaldisks.svg %
--export-pdf=fig/Normaldisks.pdf --export-latex}%
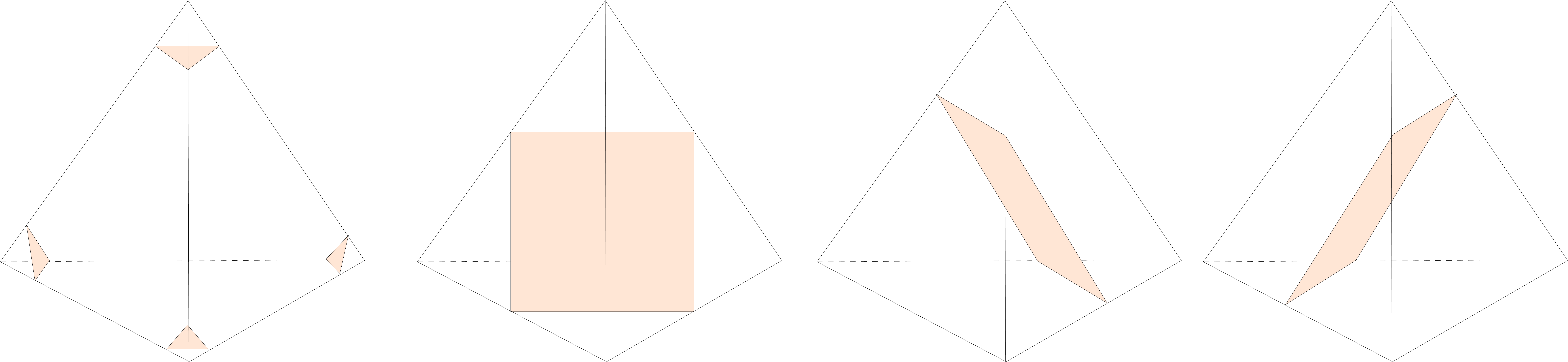%

\caption{The seven types of normal disks within a given tetrahedron: Four
  triangles and three quadrilaterals.}
\label{F:Normaldisks}
\end{figure}

Each embedded normal surface has associated \emph{normal coordinates}: a
vector in $(\Z_+)^{7t}$, where $t$ is the number of tetrahedra in $T$,
listing the number of triangles and quadrilaterals of each type in each
tetrahedron.  The normal coordinates provide a very compact and elegant
description of that surface, and satisfy two types of conditions:
\begin{itemize}
\item The first type of conditions is the \emph{matching equations}.
  Consider a normal arc type in a given non-boundary face~$f$ of~$T$.  This
  normal arc type corresponds to exactly one triangle normal coordinate,
  $v_{t,1}$, and one quadrilateral normal coordinate, $v_{q,1}$, in a
  tetrahedron incident with~$f$.  Similarly, let $v_{t,2}$ and~$v_{q,2}$ be
  the triangle and quadrilateral normal coordinates corresponding to that
  arc type in the opposite tetrahedron.  The matching equation for that arc
  type is, by definition, $v_{t,1}+v_{q,1}=v_{t,2}+v_{q,2}$.
\item The second type of conditions, the \emph{quadrilateral
    conditions}, stipulates that, within any tetrahedron, at most one of
  the three quadrilateral coordinates must be non-zero.  Indeed, two
  quadrilaterals of different types within the same tetrahedron must cross,
  and therefore this condition is needed to ensure that the surface does
  not self-intersect.
\end{itemize}

Conversely, if $T$ is a triangulation of size $t$ and $v \in
(\Z_+)^{7t}$. Then $v$ corresponds to an embedded normal surface if and
only if the matching equations and the quadrilateral conditions are
fulfilled. The reconstruction process can be described as follows:

\begin{itemize}
\item In each tetrahedron, by the quadrilateral conditions, there is at
  most one non-zero quadrilateral coordinate.  One places as many parallel
  copies of this quadrilateral as needed in the tetrahedron, and then place
  each triangle close to the vertex of the tetrahedron that it separates
  from the other three.  It is straightforward to do so without having any
  intersection between triangles and quadrilaterals.
\item One glues the faces on the triangulation together, and in the
  process, one needs to glue normal arcs, i.e., triangles or quadrilaterals
  on the one side to triangles and quadrilaterals on the other side. By the
  matching equations, the numbers fit, and the gluing is imposed by the
  order in which the normal disks are placed in the tetrahedra.
\end{itemize}

Therefore, an embedded normal surface is represented up to a normal isotopy
by a vector in $(\Z_+)^{7t}$ satisfying the matching equations and the
quadrilateral conditions.  Moreover, given a triangulation and normal
coordinates, checking that the matching equations or the quadrilateral
conditions hold can trivially be done in linear time.

From this construction, one sees moreover that every set of normal coordinates
corresponds to a unique normal surface, up to a normal isotopy.

\subsection{Singular and immersed normal surfaces}

Consider a vector~$v\in(\Z_+)^{7t}$ of normal coordinates satisfying the
matching equations, but not necessarily the quadrilateral conditions.  For
each $i\in\{1,\ldots,7t\}$, build $v_i$~normal disks of the corresponding
type in the corresponding tetrahedron, in general position, in a way that,
on each non-boundary face~$f$ of~$T$, the images of the normal arcs arising
from both sides of~$f$ agree.  The matching equations imply that such a
construction is always possible.  Note that, with this gluing, we do not
forbid intersections between normal disks in a single tetrahedron, and that
such intersections are actually necessary if two different quadrilateral
coordinates within the same tetrahedron are non-zero.

More precisely, consider a given normal arc type in a given non-boundary
face~$f$ of~$T$, corresponding (as above) to two normal coordinates
$v_{t,1}$ and~$v_{q,1}$ in a tetrahedron incident to~$f$, and also to two
normal coordinates $v_{t,2}$ and~$v_{q,2}$ in the adjacent tetrahedron.
Recall that the matching equations imply that $v_{t,1}+v_{q,1}$ and
$v_{t,2}+v_{q,2}$ are equal.  The data of a bijection between these
$v_{t,1}+v_{q,1}$ normal disks in the first tetrahedron with these
$v_{t,2}+v_{q,2}$ normal disks in the second tetrahedron is called the
\emph{local gluing} of that arc type.  The aggregated information of all
the local gluings is called the \emph{global gluing}.  

\begin{figure}[htb]
\centering
\def\svgwidth{4.5cm}
\executeiffilenewer{fig/singular.svg}{fig/singular.pdf}%
{inkscape -z -D --file=fig/singular.svg %
--export-pdf=fig/singular.pdf --export-latex}%
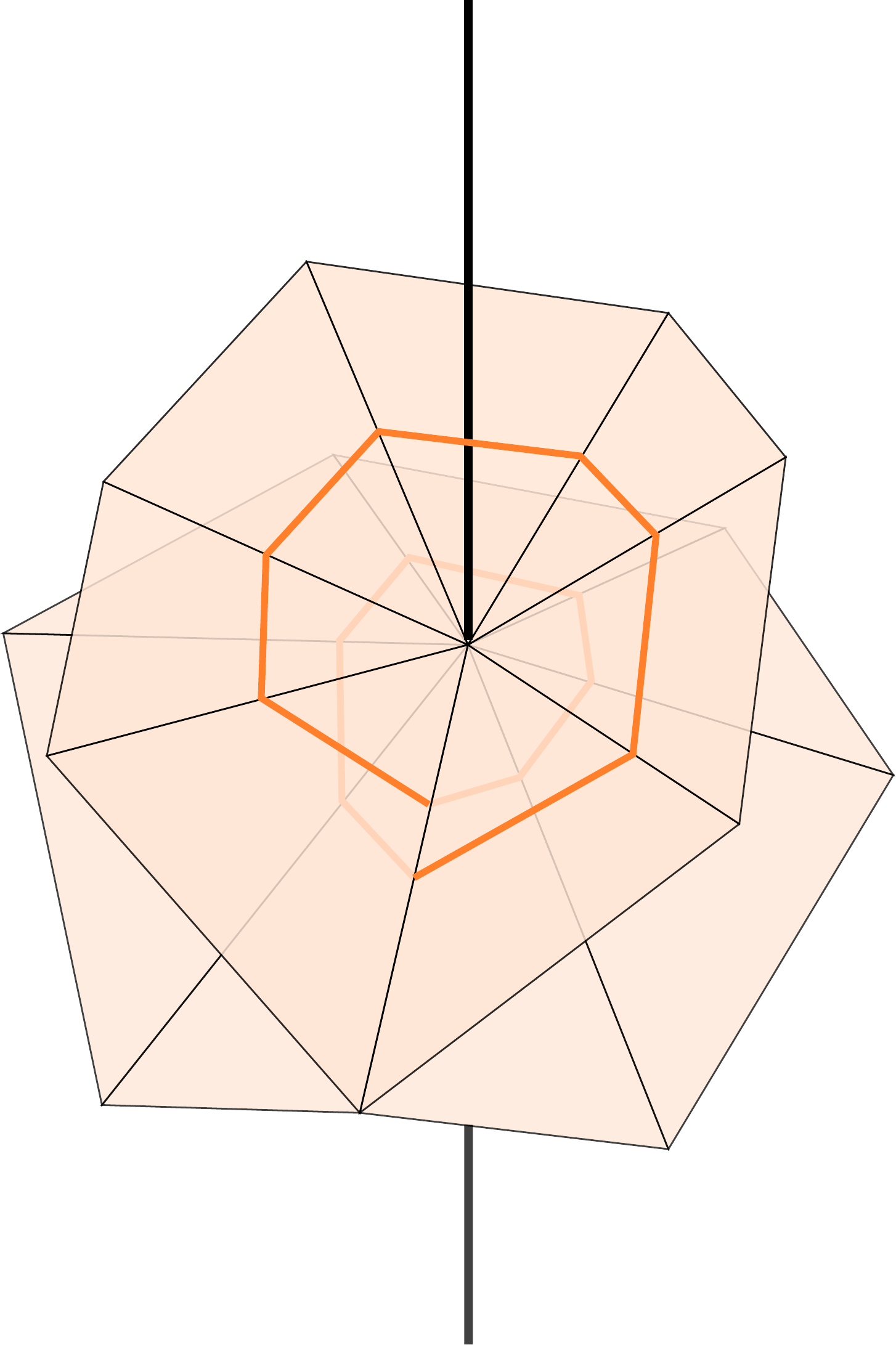%

\caption{A branch point of a singular normal surface.}
\label{F:singular}
\end{figure}

The union of these normal disks glued according to such rules is the image
of a surface under a continuous map, since abstractly gluing triangles and
quadrilaterals by pairwise identifications of edges always results in a
surface (whose actual geometric realization in~$T$ may self-cross).  This
is called a \emph{singular normal surface}.  The continuous map may either
be locally one-to-one, in which case it is called an \emph{immersed normal
  surface}, or have \emph{branch points}, as pictured in
Figure~\ref{F:singular}.  Since normal disks are embedded within each
tetrahedron, any branch point of a singular surface is necessarily on an
edge of the triangulation; it corresponds to a small closed curve on the
surface ``winding more than once'' around the edge of the triangulation.

\begin{figure}[htb]
\centering
\def\svgwidth{10cm}
\executeiffilenewer{fig/abstraction.svg}{fig/abstraction.pdf}%
{inkscape -z -D --file=fig/abstraction.svg %
--export-pdf=fig/abstraction.pdf --export-latex}%
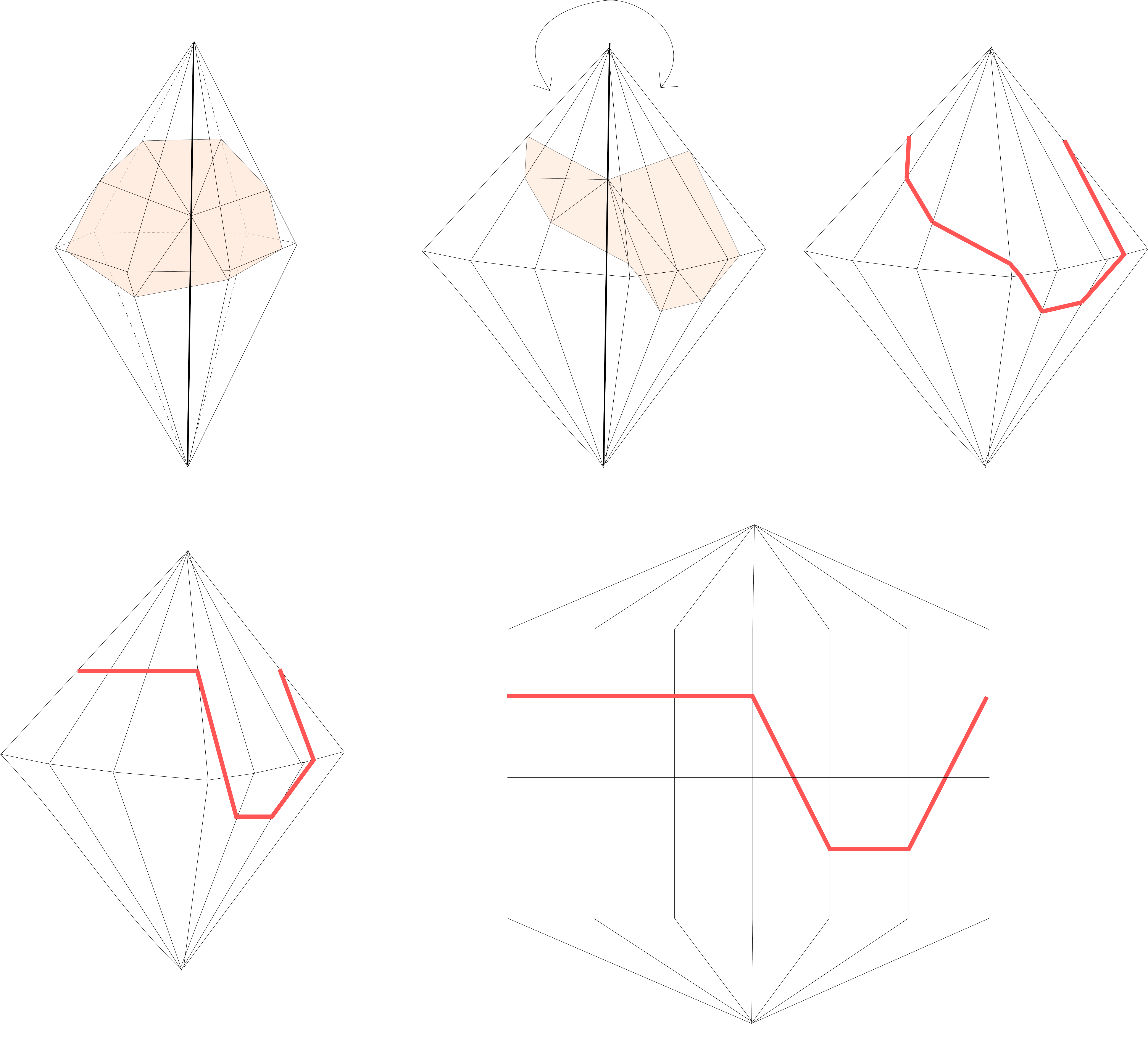%

\caption{A \emph{block}, namely, a set of tetrahedra sharing a common
  non-boundary edge, depicted with a part of a singular normal surface
  inside it, and its abstract representation.}
\label{F:abstraction}
\end{figure}

\subsection{The immersibility problem}

The main problem studied in this paper is the following.  Consider now some
normal coordinates satisfying the matching equations, but not necessarily
the quadrilateral conditions.  As described above, there are many ways to
glue the corresponding normal disks to obtain a singular normal surface.
Then, depending on the choice of the gluings, some of the resulting
surfaces may be immersed while some other may have branch points.  If there
exists a global gluing whose corresponding singular surface is immersed, we
say that the normal coordinates are \emph{immersible}. In this chapter, we
study the computational complexity of the following problem:
\begin{problem}[\textsc{Immersibility}]\ \\
  \textbf{Input}: A triangulation $T$ and normal coordinates $N$.\\
  \textbf{Output}: Are the normal coordinates $N$ immersible?
\end{problem}
Two difficulties lie at the heart of this problem: Not only do we need to
guess a ``good'' gluing, but this gluing may have an exponential complexity
in the input, since the normal coordinates are naturally compressed by the
bit representation. Therefore, the naive algorithm (implemented by
Matsumoto and Rannard~\cite{mr-rpssf-00}) is doubly exponential, which can
be seen just at the level of local gluings: If there are $k$~arcs of a given
arc type, there are $k!$ possible local gluings, while the normal
coordinates involved are $O(k)$ and can thus be encoded in $O(\log k)$
bits.

\subsection{Representation of singular normal surfaces}

To describe the singular normal surfaces more accurately, we now introduce
a schematic representation of singular normal surfaces.

A \emph{block} is a family of tetrahedra that all have one edge~$e$ in
common; see Figure~\ref{F:abstraction}(a); we always assume that $e$~is not
on the boundary of~$T$.  In what follows, we consider the normal disks that
intersect~$e$.  Since we want to picture cleanly what happens on the back
of this block, we will unfold it as in Figure~\ref{F:abstraction}(b), with
the implicit convention that the rightmost face is glued to the leftmost
face.  Although normal disks can be drawn inside this block, the pictures
easily become congested when there are several of them.  Instead, we will
forget the edge~$e$ in the representation and represent the normal disks
intersecting~$e$ by their normal arcs, i.e., by their intersection with the
front faces (Figure~\ref{F:abstraction}(c)).  These normal arcs are glued
together and form possibly self-intersecting closed curves, called
\emph{block curves}.  Abstracting a bit more, horizontal lines will
represent triangles, while diagonal ones will stand for quadrilaterals
(Figure~\ref{F:abstraction}(d)).  Finally, to make these pictures even more
readable, we will draw the edges between the tetrahedra vertically, only
linking them at the extreme top and bottom parts of the figures
(Figure~\ref{F:abstraction}(e)).

\begin{figure}[htb]
\centering
\def\svgwidth{11cm}
\executeiffilenewer{fig/usual.svg}{fig/usual.pdf}%
{inkscape -z -D --file=fig/usual.svg %
--export-pdf=fig/usual.pdf --export-latex}%
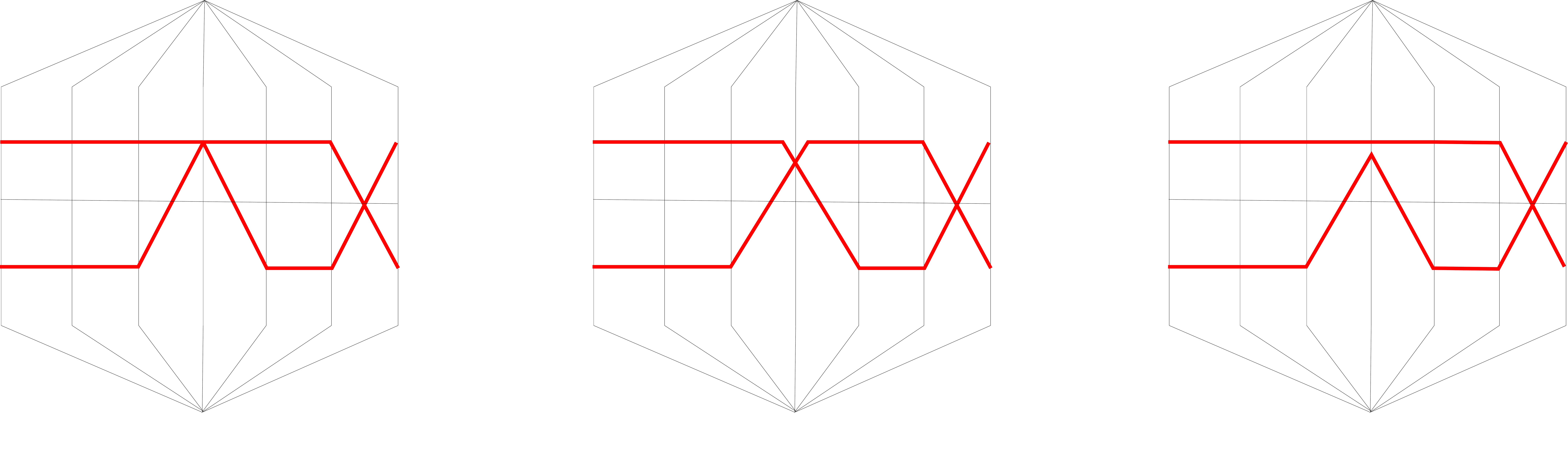%

\caption{Normal coordinates drawn (a) without specifying gluings, (b) with
  a specified gluing, (c) with the opposite gluing from (b).}
\label{F:usual}
\end{figure}

We will use the following convention in the figures (see
Figure~\ref{F:usual}): Whenever we want to represent normal coordinates,
without a specific gluing, the normal arcs are drawn so that they connect
the midpoints of the corresponding edges of the triangulation.  Whenever we
want to represent normal coordinates with a particular gluing, we perturb
these normal arcs to emphasize the crossings.

The singular normal surface has a branch point at~$e$ if and only if some
block curve ``winds more than once'' around~$e$ or, equivalently,
self-intersects, like in Figure~\ref{F:branch}.  With this in mind, it is
easy to see that a branch point can only occur on a \emph{non-boundary}
edge of~$T$.

\begin{figure}[htb]
\centering
\def\svgwidth{12cm}
\executeiffilenewer{fig/branch.svg}{fig/branch.pdf}%
{inkscape -z -D --file=fig/branch.svg %
--export-pdf=fig/branch.pdf --export-latex}%
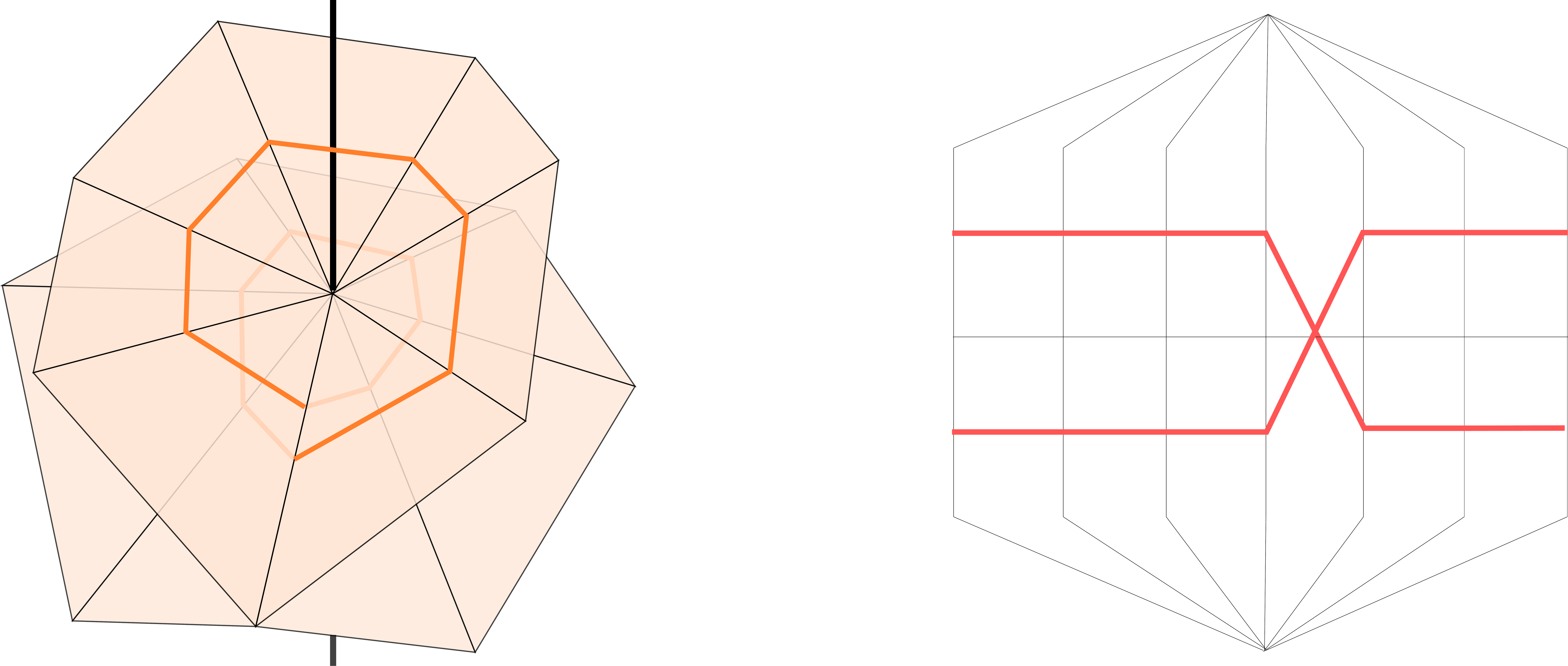%

\caption{A branch point (also depicted in Figure~\ref{F:singular}) and
  its representation by a block curve winding twice around an edge.}
\label{F:branch}
\end{figure}

Therefore, given the data of normal coordinates satisfying the matching
equations, together with a global gluing, we can easily determine whether
the corresponding singular normal surface is immersed or not, in time
linear in the size of the input (namely, sum of all the normal coordinates,
of the complexity of the triangulation~$T$ and the complexity of the global
gluing).  However, our goal is to prove that, given only the normal
coordinates, it is NP-hard to decide whether some global gluing leads to an
immersed surface.

\subsection{Complexity classes}

We now very briefly introduce some standard notions of complexity theory;
see, e.g., Arora and Barak~\cite{ab-ccma-06} for more details.  P denotes
the class of yes/no problems solvable in polynomial time.  NP denotes the
class of yes/no problems whose ``yes'' instances admit a polynomial-sized
\emph{certificate}: some additional data such that, when given the instance
and the certificate, it can indeed be verified in polynomial time that the
instance is a ``yes'' instance.  Obviously, P is included in~NP, and
probably the most outstanding open question in computer science is to
determine whether P is equal or different from NP; it is widely believed
that they are different.

A problem $A$ \emph{reduces} to a problem~$B$ (roughly) if, when given an
oracle to solve problem~$B$, one can solve problem~$A$ in polynomial time.
A problem~$A$ is \emph{NP-hard} if it is ``at least as hard'' as any
problem in NP, or more precisely if every problem in~NP reduces to it.  If
P and NP are different, then no NP-hard problem can be solved in polynomial
time.  Finally, a problem is \emph{NP-complete} if it is both NP-hard and
in NP.

\subsection{Boolean constraint satisfaction problems}

In this section, we recall a few basic results about Boolean constraint
satisfaction problems; our presentation is inspired from Dalmau and
Ford~\cite{df-gslov-03}.  For a more detailed account of this tremendous
body of research, we refer to the book of Creignou, Khanna, and
Sudan~\cite{cks-ccbcs-01}.

We start by introducing the \emph{generalized satisfiability problem}
$\SAT(R)$, which is a variant of the usual SAT problem. An $r$-ary
relation~$R$ is any nonempty subset of $\{0,1\}^r$. A $\CNF(R)$-formula is
a finite conjunction of clauses $C_1 \wedge \ldots \wedge C_n$ such that
each clause, $C_i$, is an \emph{atomic formula} of the form $R(v_1, \ldots
,v_r)$ where $v_1 \ldots v_r$ are Boolean variables. An atomic formula
$R(v_1, \ldots v_r)$ is \emph{satisfied} by a variable assignment $f : V
\rightarrow \{0,1\}$ if and only if $(f(v_1) \ldots f(v_r)) \in R$, and a
$\CNF(R)$ formula is \emph{satisfiable} if and only if there exists an
assignment satisfying all its clauses simultaneously. Each relation $R$
gives rise to the \emph{generalized satisfiability problem}~$\SAT(R)$:
given a $\CNF(R)$-formula, is it satisfiable?

It is sometimes convenient to assume that constants can appear in
$\CNF(R)$-formulas: Each clause is an atomic formula of the form
$R(v_1,\ldots,v_r)$ where each $v_i$ is a Boolean variable or a constant (0
or~1).  We call any formula obtained this way a
$\CNF_C(R)$-formula. Similarly, the problem
\emph{generalized satisfiability problem with constants},
$\SAT_C(R)$, is defined with $\CNF_C(R)$-formulas.

The computational complexity of the generalized satisfiability problem with
constants has been completely classified by Schaefer in a celebrated
paper~\cite{s-csp-78}.  We introduce the following definitions in order to
state this classification.  Here, the symbols $\wedge$, $\vee$,
and~$\oplus$: $\{0,1\}^r\times\{0,1\}^r\to\{0,1\}$ denote the usual logical
operations AND, OR and XOR, applied bit-wise.

A relation $R$ is 
\begin{itemize}
\item \emph{Horn} if $x,y \in R \rightarrow x \wedge y \in
R$, 
\item \emph{dual-Horn} if $x,y \in R \rightarrow x \vee y \in R$,
\item \emph{bijunctive} if $x,y,z \in R \rightarrow (x \wedge y) \vee (x \wedge
z) \vee (y \wedge z) \in R$, 
\item \emph{affine} if $x,y,z \in R \rightarrow x
\oplus y \oplus z \in R$.
\end{itemize}
A relation $R$ is \emph{Schaefer} if it is Horn, dual-Horn,
bijunctive, or affine.

\begin{theorem}[\cite{s-csp-78}]\label{T:schaefer}
  Let $R$ be a relation. If $R$ is Schaefer, then $\SAT_C(R)$ is in P,
  otherwise it is NP-hard.
\end{theorem}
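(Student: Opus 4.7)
The plan is to prove the two implications separately, handling the tractability direction by a case-by-case inspection and the hardness direction by a uniform reduction from a classical NP-hard SAT variant.

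For the tractability direction, each of the four Schaefer classes admits a direct polynomial-time algorithm. If $R$ is Horn, then its set of satisfying tuples is closed under bit-wise conjunction, which is the standard characterization of relations definable by a conjunction of Horn clauses (clauses with at most one positive literal). Hence every atomic formula $R(v_1,\ldots,v_r)$ can be rewritten as a conjunction of Horn clauses, and satisfiability reduces to standard Horn-$\SAT$, which is in P via unit propagation. The dual-Horn case is entirely symmetric. If $R$ is bijunctive, then it is closed under the ternary majority operation, which by a classical argument is equivalent to being expressible as a conjunction of $2$-clauses; therefore $\SAT_C(R)$ reduces to $2$-$\SAT$, solvable in polynomial time. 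Finally, if $R$ is affine, then it is closed under ternary XOR, which means it is the set of solutions of a system of affine equations over $\mathbb{F}_2$, and $\SAT_C(R)$ reduces to Gaussian elimination. In each case the rewriting of a clause of $R$ into the simpler form can be precomputed from $R$ alone and thus costs only a constant depending on the arity.

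For the hardness direction, the strategy is to show that if $R$ fails all four properties simultaneously, then one can \emph{primitive positive define} (i.e., build up using conjunction, existential quantification, and the constants~$0$ and~$1$) a relation already known to yield an NP-hard generalized satisfiability problem, such as the ternary relation encoding a $3$-$\SAT$ clause or the $1$-in-$3$ relation. To this end, I would fix tuples in $R$ witnessing each of the four failures and, by a careful combinatorial bookkeeping, glue them together into a $\CNF_C(R)$-gadget whose projection onto selected coordinates is the desired target relation. Equivalently, one may appeal to Post's lattice of Boolean clones: the clone of operations preserving $R$ fails to contain any of $\wedge$, $\vee$, majority, or minority, and by the lattice this forces it into one of the clones whose associated CSP is NP-hard.

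The main obstacle is this hardness direction: one must organize the case analysis so that no ``hidden'' tractable closure property survives the combination of the four failures, and then exhibit explicit gadgets rather than appealing to abstract machinery. Either the direct but somewhat tedious case analysis of Schaefer, or a cleaner invocation of Post's lattice together with the known dichotomies on individual clones, is required. Once the target relation is pp-definable, the reduction is immediate: any instance of the known NP-hard problem is encoded by replacing each clause with its $\CNF_C(R)$-gadget, introducing fresh auxiliary variables for the existentially quantified positions and plugging in $0$ and $1$ as required.
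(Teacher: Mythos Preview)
The paper does not actually prove this theorem: it is stated with a citation to Schaefer's original 1978 paper and used as a black box, with no proof or proof sketch given. So there is nothing in the paper to compare your proposal against.

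That said, your outline is a faithful summary of how the dichotomy is usually established. The tractability direction is exactly as you describe. For the hardness direction you correctly identify the two standard routes---Schaefer's original explicit gadget construction, or the modern algebraic argument via Post's lattice of Boolean clones---and you are right that the real work lies there. Your sketch stops short of carrying out either route, but since the paper itself simply invokes the result, your level of detail already exceeds what the paper provides.
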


For our reduction, we will restrict ourselves to constraint satisfaction
problems where the number of occurrences of every variable is at most 2.
We denote by $\SAT(2,R)$ the instances of $\SAT(R)$ in which every variable
occurs at most twice.  Similarly, we denote by $\SAT_C(2,R)$ the instances
of $\SAT_C(R)$ in which every variable occurs at most twice. The following
definition is key to the classification of these problems.

Let $R \subseteq \{0,1\}^r$ be a relation. Let $x,y,x' \in \{0,1\}^r$, then
$x'$ is a \emph{step} from $x$ to $y$ if $d(x,x')=1$ and
$d(x,x')+d(x',y)=d(x,y)$, where $d$ is the Hamming distance. $R$ is a
\emph{$\Delta$-matroid (relation)} if it satisfies the following
\emph{two-step axiom}:
  \begin{quote}
    For all $x,y \in R$ and for all $x'$ a step from $x$ to $y$, either
    $x'\in R$ or there exists $x'' \in R$ which is a step from $x'$ to $y$.
  \end{quote}

We now come to the classification theorem for $\SAT_C(2,R)$:
\begin{theorem}[Feder~\cite{f-flcs-01}]\label{T:feder}
  Let $R$ be a relation that is not a $\Delta$-matroid relation. Then
  $\SAT_C(2,R)$ is polynomially equivalent to $\SAT_C(R)$.
\end{theorem}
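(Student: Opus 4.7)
The plan is to prove the two reductions separately. One direction is immediate: every instance of $\SAT_C(2,R)$ is syntactically already an instance of $\SAT_C(R)$, so $\SAT_C(2,R) \le_P \SAT_C(R)$ trivially. The substantive direction---that $\SAT_C(R) \le_P \SAT_C(2,R)$ whenever $R$ violates the two-step axiom---will be obtained by building a \emph{replication gadget} that allows each variable occurrence in an arbitrary $\CNF_C(R)$-formula to be moved to a fresh copy of the variable.

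Concretely, I would aim to implement, as a $\CNF_C(R)$-formula in which every variable appears at most twice, a \emph{copy} relation $\mathrm{COPY}(a,b,c) \equiv (a = b = c)$ where each of the three terminals $a, b, c$ appears only once inside the gadget. Given such a gadget, the reduction from $\SAT_C(R)$ to $\SAT_C(2,R)$ is routine: for each variable $v$ of the input formula $\phi$ appearing $k > 2$ times, introduce fresh copies $v^{(1)},\dots,v^{(k)}$, substitute the $j$-th occurrence of $v$ by $v^{(j)}$, and then arrange these copies as the leaves of a balanced binary tree of $\mathrm{COPY}$-gadgets whose internal nodes are fresh auxiliary variables. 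Each leaf then appears at most twice (once in $\phi$, once as a branch of its gadget), each internal non-root node appears exactly twice (once as the top terminal of its gadget, once as a branch of the gadget above), the root appears once, and the overall blow-up is linear per variable of $\phi$, giving a polynomial-time reduction.

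The main obstacle is constructing the $\mathrm{COPY}$ gadget from the sole hypothesis that $R$ is not a $\Delta$-matroid. That hypothesis furnishes a single rigid local configuration: tuples $x, y \in R$ and a step $x' \notin R$ from $x$ to $y$ such that no further step from $x'$ to $y$ lies in $R$. My plan is to exploit this rigidity by a combination of restriction and constant substitution, the latter being available because we are in the $\SAT_C$ setting rather than $\SAT$. Freezing the coordinates outside the support $I = \{i : x_i \ne y_i\}$ to their common value in $x$ and $y$ yields a sub-relation $R' \subseteq \{0,1\}^I$ that contains $x|_I = 0^{|I|}$ and $y|_I = 1^{|I|}$ but excludes $x'|_I$ together with all its Hamming-weight-$1$ neighbors toward $y|_I$. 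Further substituting constants into all but two or three coordinates of $R'$ extracts a small non-trivial binary or ternary relation that, via a constant number of compositions, first simulates equality $\mathrm{EQ}(a,b) \equiv (a=b)$ and then $\mathrm{COPY}$.

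The delicate point---where Feder's proof concentrates most of its work---is to show that this case analysis produces the required gadget for \emph{every} non-$\Delta$-matroid $R$, and to verify at each composition step that the occurrence bound of two is preserved. In particular, naive chaining of binary equality gadgets breaks the occurrence bound on the middle copies $v^{(j)}$, which is precisely why the ternary $\mathrm{COPY}$-gadget (or an equivalent higher-arity device) is needed, and why the construction of such a gadget from an arbitrary non-$\Delta$-matroid relation is the technical heart of the argument.
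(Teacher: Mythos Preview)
The paper does not prove this theorem at all: it is stated as a cited result of Feder and used as a black box, together with Schaefer's theorem, to obtain Corollary~\ref{C:complexity}. There is therefore no proof in the paper to compare your proposal against.

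On its own merits, your sketch has the right architecture---the trivial direction, and for the substantive direction the construction of a ternary replication gadget from the failure of the two-step axiom, followed by a tree of such gadgets to bound occurrences---and this is indeed the shape of Feder's argument. But you have explicitly left the central technical step as a gap: the claim that restricting to the support~$I$ and then substituting constants into all but two or three coordinates always yields a relation from which $\mathrm{COPY}$ can be implemented, while respecting the occurrence bound throughout. That claim is the entire content of the theorem, and your proposal does not supply it; you even acknowledge as much. Without a concrete case analysis showing how every possible shape of the witnessing triple $(x,y,x')$ leads to a usable gadget, what you have is an outline of where a proof would go, not a proof.
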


Theorems~\ref{T:schaefer} and~\ref{T:feder} immediately imply the
complexity result that we will use:
\begin{corollary}\label{C:complexity}
  Let $R$ be a relation that is not Schaefer (that is, not Horn, dual Horn,
  bijunctive, or affine) and not a $\Delta$-matroid.  Then $\SAT_C(2,R)$ is
  NP-hard.
\end{corollary}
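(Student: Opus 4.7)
The plan is simply to chain Theorems~\ref{T:schaefer} and~\ref{T:feder}. First, I would invoke Theorem~\ref{T:schaefer}: by hypothesis $R$ is not Horn, not dual-Horn, not bijunctive, and not affine, hence not Schaefer, so the theorem gives that $\SAT_C(R)$ is NP-hard.

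Next, I would use Theorem~\ref{T:feder}: since by hypothesis $R$ is also not a $\Delta$-matroid relation, Feder's theorem asserts that $\SAT_C(2,R)$ is polynomially equivalent to $\SAT_C(R)$. In particular, there is a polynomial-time reduction from $\SAT_C(R)$ to $\SAT_C(2,R)$.

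Combining the two, any problem in NP reduces in polynomial time to $\SAT_C(R)$ (by NP-hardness from Schaefer), and $\SAT_C(R)$ reduces in polynomial time to $\SAT_C(2,R)$ (by Feder). Since polynomial-time reductions compose, every problem in NP reduces in polynomial time to $\SAT_C(2,R)$, establishing NP-hardness. There is no real obstacle here: the entire content of the corollary is packaged in the two cited theorems, and the only thing to check is that the hypotheses of both are simultaneously satisfied, which is exactly what we have assumed.
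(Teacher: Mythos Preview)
Your proposal is correct and matches the paper's approach exactly: the paper states this corollary as an immediate consequence of Theorems~\ref{T:schaefer} and~\ref{T:feder}, and your chaining of Schaefer's dichotomy with Feder's polynomial equivalence is precisely the intended argument.
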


%%%%%%%%%%%%%%%%%%%%%%%%%%%%%%%%%%%%%%%%%%%%%%%%%%%%%%%%%%%%%%%%%%%%%%%%%%
\section{NP-hardness of detecting immersibility}\label{S:thm}

In this section, we prove the following theorem.
\begin{theorem}\label{T:main}
  The problem \textsc{Immersibility} is NP-hard.
\end{theorem}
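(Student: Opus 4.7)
The plan is to reduce from $\SAT_C(2,R)$ for a carefully chosen relation $R$ that is neither Schaefer nor a $\Delta$-matroid, so that Corollary~\ref{C:complexity} supplies the NP-hardness we need. Given a $\CNF_C(R)$-formula $\phi$ in which each variable occurs at most twice, I will build in polynomial time a triangulation $T$ and normal coordinates $N$ such that $N$ is immersible if and only if $\phi$ is satisfiable. The reason for working in this restricted setting, rather than ordinary SAT, is dictated by the geometry: every arc type lives on a single face of $T$, and this face is incident to at most two tetrahedra, so the corresponding ``Boolean choice'' (which local gluing is used) can be read by at most two clause gadgets. Attempting a standard SAT reduction, which would duplicate variables arbitrarily, is exactly what runs into this rigidity, and this is precisely where Feder's classification (Theorem~\ref{T:feder}) is essential.

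The core building block is a \emph{clause gadget}: a set of tetrahedra sharing a non-boundary edge $e$, equipped with prescribed normal coordinates, analyzed in the block representation of Figure~\ref{F:abstraction}. On each face of this block the matching equations force the arcs of a given type on one side to be paired, via some bijection, with the arcs of that same type on the other side; these local bijections are the only degrees of freedom, and by Figure~\ref{F:branch} the global gluing is branch-point-free at $e$ exactly when no block curve winds more than once around $e$. I would tune the number of arcs of each type so that each free arc type carries exactly two possible local gluings, encoding a Boolean variable, and then choose the multiplicities and positions of triangles and quadrilaterals so that the tuples of local gluings producing no branch point at $e$ form precisely the relation $R$. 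Constants of $\CNF_C(R)$ would be realized either by placing certain arc types on boundary faces, where the gluing is forced, or by attaching small rigid sub-gadgets that admit a unique local gluing.

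To assemble the full reduction, the two occurrences of each variable of $\phi$ correspond to the two sides of a single face of $T$, shared by the two clause gadgets that contain that variable: the same local-gluing choice is automatically felt by both, which is how variable consistency is enforced. Negations and the routing needed to bring the right pairs of clause gadgets together would be handled by small ``wire'' blocks whose own branch-point conditions are trivial for every local-gluing choice but which simply relabel the two gluings of a passing arc type. I would then verify, using Lemma~\ref{L:manifold}, that the complex obtained by gluing all these blocks along faces is genuinely a $3$-manifold triangulation (the link of every vertex is a sphere or disk, and no edge is identified to itself with reverse orientation); finally I would check that the correspondence between local-gluing assignments and variable assignments turns satisfying assignments of $\phi$ into branch-point-free global gluings and conversely.

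The main obstacle is the explicit choice of $R$ together with a clause gadget that realizes exactly $R$ as its set of branch-point-free local gluings. The relation must simultaneously fail to be Horn, dual-Horn, bijunctive, affine, and a $\Delta$-matroid, while being simple enough to be encoded by a permutation-avoidance condition around a single edge. A secondary difficulty is ensuring that the wire gadgets are strictly neutral, i.e., that their contribution to the immersibility constraints is vacuous regardless of which of the two local gluings the wired variable is assigned, and that all the additional arc types these wires introduce remain within the ``at most two occurrences'' regime. Once a suitable $R$ and clause gadget are in hand, the rest of the reduction (global assembly, manifold check, polynomial-time bookkeeping) is routine, and combining everything with Corollary~\ref{C:complexity} yields Theorem~\ref{T:main}.
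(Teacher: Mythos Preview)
Your proposal is correct and follows essentially the same route as the paper: a reduction from $\SAT_C(2,R)$ via clause gadgets (blocks of tetrahedra around a single non-boundary edge whose branch-point-free gluings realize~$R$), tube/``wire'' gadgets linking the two occurrences of each variable, constant gadgets for $0$ and~$1$, and a manifold check through Lemma~\ref{L:manifold}. The paper supplies the explicit $6$-ary relation~$R$ and the concrete gadgets you correctly flag as the main obstacle; one small slip is that $\SAT_C(R)$ formulas contain no negations, so that part of your wire design is unneeded.
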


The proof of Theorem~\ref{T:main} will proceed by a reduction of
$\SAT_C(2,R)$ to the problem $\textsc{Immersibility}$ for a relation $R$
that is neither Schaefer nor a $\Delta$-matroid, which implies by
Corollary~\ref{C:complexity} that $\SAT_C(2,R)$ and hence
$\textsc{Immersibility}$ are NP-hard.

\subsection{Gadgets}

We now show how to reduce $\SAT_C(2,R)$ to the problem
$\textsc{Immersibility}$. To this end, we use a $6$-ary relation $R$ which
we will describe later. We start with a formula $\Phi$ that is, by
definition, a conjunction of clauses of the form $R(x_{i_1}, x_{i_2},
x_{i_3}, x_{i_4}, x_{i_5}, x_{i_6})$, where $x_i$ is either a variable or a
constant, and every variable appears at most twice in $\Phi$.

The gadgets that we use for our reduction are of three types.  Each clause
is represented by a \emph{clause gadget}, a block of six tetrahedra glued
together around an edge.  For each variable occurring exactly twice
in~$\Phi$, we connect these two occurrences in the clauses using
\emph{tubes}, which are also blocks of six tetrahedra.  Finally, the
\emph{constant gadgets} are used to represent the constants $0$ or $1$
appearing in the clauses. The idea for the proof is that a clause is
satisfiable if and only if the normal coordinates in the clause gadget are
immersible; the tubes then enforce consistency between the
clauses. Therefore, the whole formula will be satisfiable if and only if
the associated normal coordinates are immersible.

\paragraph*{The clause gadget.}

Consider the gadget $G$ pictured in Figure~\ref{F:clausegadget}. It
consists of six tetrahedra that all have an edge in common, and contain
each three normal disks: two triangles and one quadrilateral. For every
clause in $\Phi$, we create a copy of the clause gadget~$G$; these copies
will be connected using tubes, described below.

\begin{figure}[htb]
\centering
\def\svgwidth{5.5cm}
\executeiffilenewer{fig/maingadget.svg}{fig/maingadget.pdf}%
{inkscape -z -D --file=fig/maingadget.svg %
--export-pdf=fig/maingadget.pdf --export-latex}%
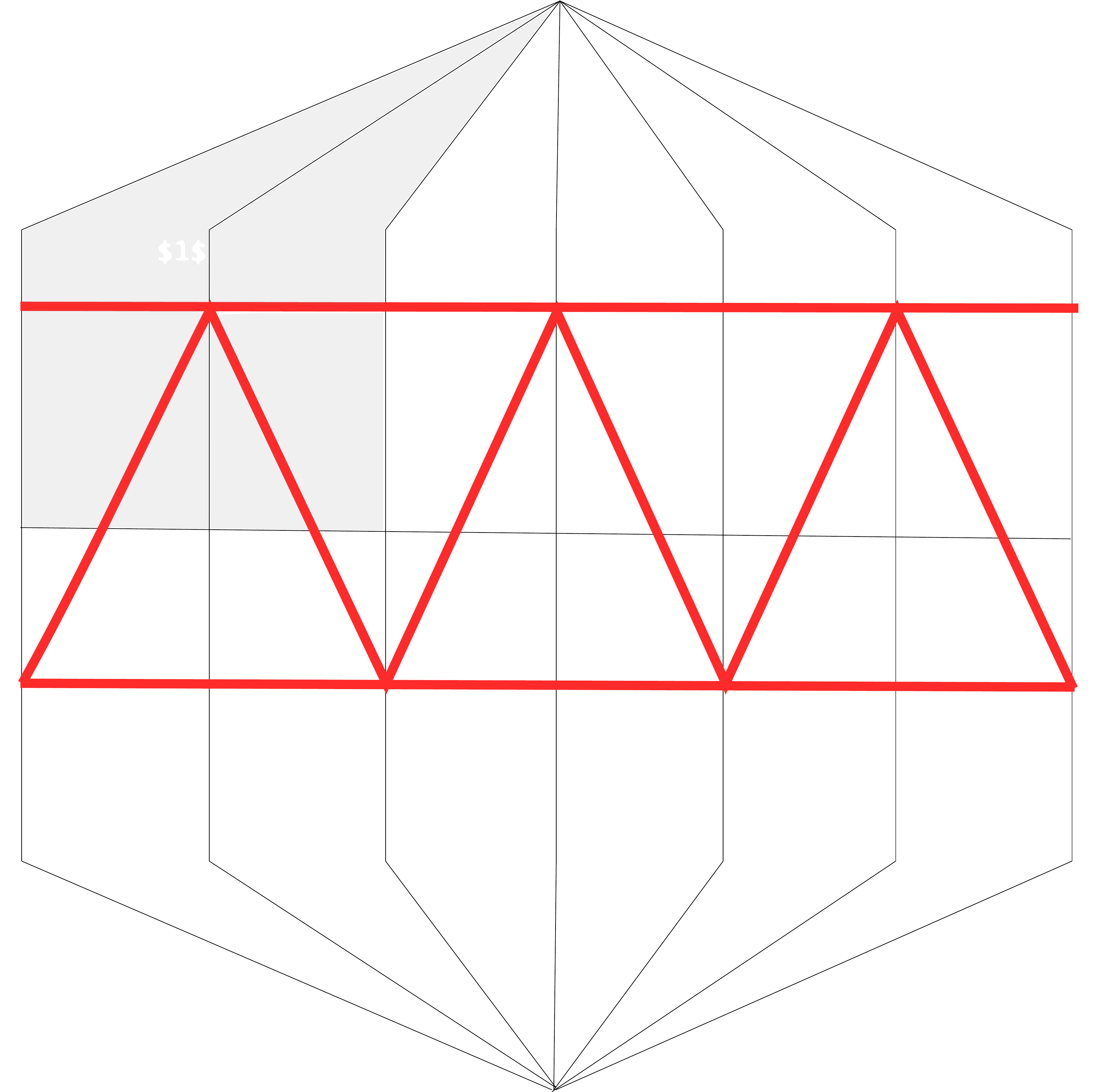%

\caption{The clause gadget.  The couple of faces corresponding to the
  variable~$x_1$ is shaded.}
\label{F:clausegadget}
\end{figure}

The rationale behind this gadget is the following. At the interface of two
adjacent tetrahedra, exactly two gluings can be done (see
Figure~\ref{F:gluings}). This choice of gluing can be described by a
variable $x_i \in \{0,1\}$, where $0$ corresponds to the gluing a. and $1$
to the gluing b.  Equivalently, a value of~$1$ corresponds to the fact that
the two block curves at the specified position cross. Therefore, each
variable in a clause has a pair of associated faces on the boundary of the
clause gadget; for example, the two shaded triangles in
Figure~\ref{F:clausegadget} are the pair of faces associated to
variable~$x_1$.

This way, a global gluing of the singular normal surface in $G$ is
described by an element $x \in \{0,1\}^6$. The order on the six variables
is pictured in Figure~\ref{F:clausegadget}.

\begin{figure}[htb]
\centering
\def\svgwidth{5.5cm}
\executeiffilenewer{fig/gluings.svg}{fig/gluings.pdf}%
{inkscape -z -D --file=fig/gluings.svg %
--export-pdf=fig/gluings.pdf --export-latex}%
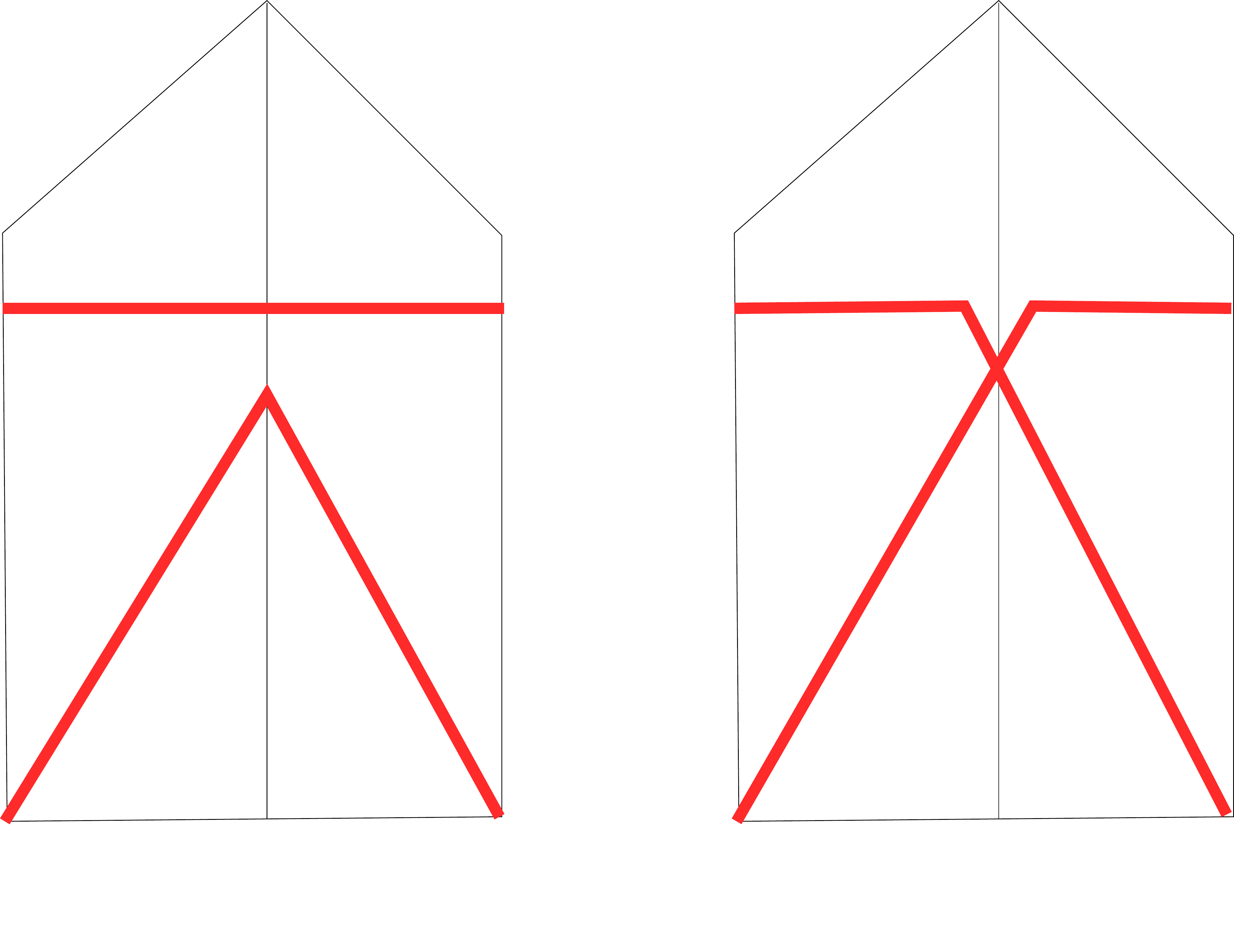%

\caption{a. Local gluing corresponding to $x_i=0$. b. Local gluing
  corresponding to $x_i=1$.}
\label{F:gluings}
\end{figure}

We define the following relation $R$ on $6$ variables:

\begin{eqnarray*}
R&=\ \ \big\{& (0,0,0,0,0,0);(0,0,0,1,0,1);(0,0,1,0,1,0);(0,1,0,0,0,1);(0,1,0,1,0,0);\\
 &
 &(0,1,1,0,1,1);(1,0,0,0,1,0);(1,0,1,0,0,0);(1,0,1,1,0,1);(1,1,0,1,1,0);\\
 &
 &(1,1,1,1,1,1)\ \ \big\}
\end{eqnarray*}

This allows us to get to the following lemma.

\begin{lemma}\label{L:relationGadget}
The singular normal surface in the gadget $G$ specified by the gluing $x
\in \{0,1\}^6$ is immersed if and only if $x \in R$.
\end{lemma}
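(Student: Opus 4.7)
The strategy is a direct combinatorial analysis: for each gluing $x \in \{0,1\}^6$, I would determine the block curves resulting from the local gluings, and check whether any of them winds more than once around the central edge~$e$. By the discussion preceding the lemma, the singular normal surface is immersed precisely when no such block curve exists.

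First I would fix notation. Label the six tetrahedra $T_1,\ldots,T_6$ in cyclic order around~$e$, and in each $T_i$ let $A_i$ and $B_i$ be the two triangles (near the top and bottom endpoint of~$e$, respectively) and $Q_i$ the quadrilateral. A short check shows that to satisfy the matching equations on each interface face, the quadrilateral type in $T_i$ must alternate with the parity of~$i$, so that at the interface between $T_i$ and $T_{i+1}$, two arcs of one type (near the top vertex of~$e$ if $i$ is even, near the bottom vertex if $i$ is odd) appear on each side, while the remaining arc type is present once on each side and is matched in the only possible way. The bit $x_i$ records which of the two possible matchings of the two-arc type is chosen: the uncrossed one ($x_i=0$) or the crossed one ($x_i=1$), as in Figure~\ref{F:gluings}.

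The data~$x$ therefore determines a permutation $\sigma_x$ on the $18$ arcs, defined by sending each arc to the next arc along the block curve traversed in a fixed direction. The block curves are exactly the cycles of $\sigma_x$, and since the cyclic structure of the block forces any block curve to cross each of the six interfaces the same number of times, a cycle of length $L$ corresponds to a block curve winding $L/6$ times around~$e$. Consequently, the surface is immersed iff $\sigma_x$ has cycle type $6+6+6$; since the total length is $18$, this is equivalent to saying that every cycle of $\sigma_x$ has length at most~$6$.

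It remains to verify that the cycle type of $\sigma_x$ is $6+6+6$ iff $x\in R$. This is a finite case analysis of $2^6=64$ entries; I would carry it out by exploiting the order-$3$ rotational symmetry of the gadget (the shift $i\mapsto i+2$ preserves the parity pattern and hence commutes with the construction of $\sigma_x$), which reduces the work to checking one representative per orbit: two fixed points and twenty orbits of size three. The main obstacle is purely bookkeeping: one must correctly distinguish the forced from the free matchings at interfaces of different parities, and then confirm, for each representative, that tracing $\sigma_x$ closes up after exactly six steps for the eleven elements of~$R$ and after twelve or eighteen steps for the remaining fifty-three. While conceptually routine, this enumeration is the delicate, error-prone heart of the lemma.
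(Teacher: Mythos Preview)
Your approach is essentially the paper's own proof: the authors simply say ``the proof is done by exhaustive checking'' over all $64$ tuples. Your write-up adds the (correct) observation that the block curves are the cycles of a permutation on $18$ arcs whose cycle lengths are multiples of~$6$, and proposes to organise the case analysis via the shift-by-$2$ symmetry---a reasonable refinement, but not a genuinely different route.

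One small slip: the cyclic shift $i\mapsto i+2$ on $\{0,1\}^6$ has \emph{four} fixed points, not two---namely the tuples with $x_1=x_3=x_5$ and $x_2=x_4=x_6$, i.e.\ $(0,0,0,0,0,0)$, $(1,0,1,0,1,0)$, $(0,1,0,1,0,1)$, $(1,1,1,1,1,1)$. So the orbit count is $4$ singletons plus $20$ triples, not $2+20$. This is only bookkeeping and does not affect the validity of the argument.
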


\begin{proof}[Proof of Lemma~\ref{L:relationGadget}]
  The proof is done by exhaustive checking, i.e., checking for every
  possible 6-tuple whether there is a branch point around the central edge
  or not. As an example, Figure~\ref{F:relation} pictures the singular
  normal surfaces obtained with the global gluings $(1,0,1,1,0,1)$ and
  $(1,0,1,1,1,1)$, yielding in one case an immersed normal surface and in
  the other a singular normal surface with a branch point.
\end{proof}

\begin{figure}[htb]
\centering
\def\svgwidth{8cm}
\executeiffilenewer{fig/relation.svg}{fig/relation.pdf}%
{inkscape -z -D --file=fig/relation.svg %
--export-pdf=fig/relation.pdf --export-latex}%
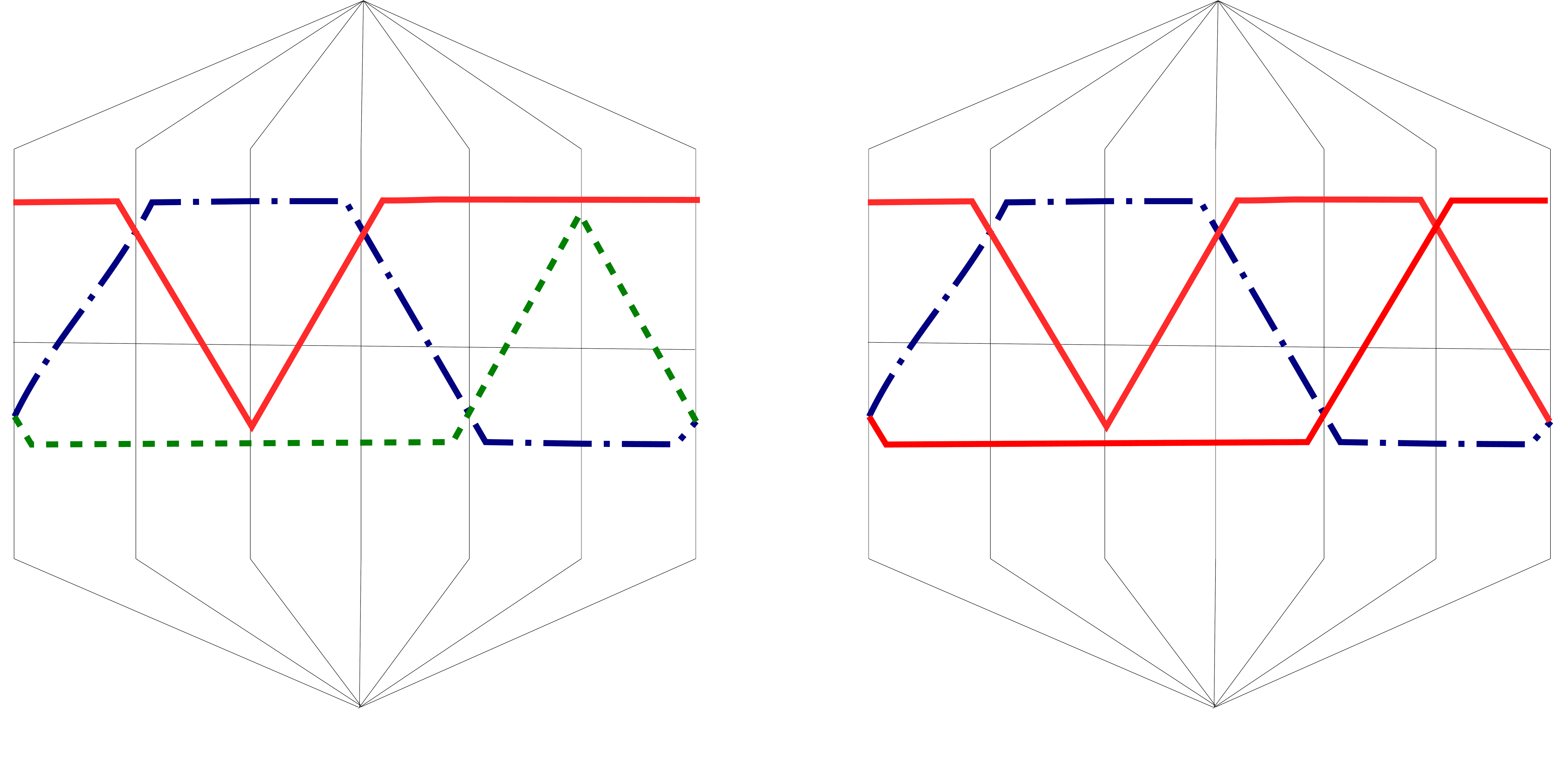%

\caption{(a) Immersed normal surface corresponding to coordinates
  (1,0,1,1,0,1), yielding three block curves.  (b) Singular normal surface
  corresponding to coordinates (1,0,1,1,1,1), yielding two block
  curves. The one winding twice around the central edge witnesses a branch
  point.}
\label{F:relation}
\end{figure}

Moreover:
\begin{proposition}\label{P:propSchaefer}
  $SAT_C(2,R)$ is NP-hard.
\end{proposition}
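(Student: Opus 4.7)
My plan is to invoke Corollary~\ref{C:complexity} and verify that the explicit relation $R$ listed above is (i)~not Horn, (ii)~not dual-Horn, (iii)~not bijunctive, (iv)~not affine, and (v)~not a $\Delta$-matroid. Since $R$ has only $11$ tuples in $\{0,1\}^6$, each of these five non-properties can be established by exhibiting a small fixed set of witnesses from $R$ and checking, by lookup in the list of $11$ tuples, that the relevant bitwise combination falls outside $R$.

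For properties (i)--(iv) I would simply scan pairs and triples of elements of $R$, computing bitwise $\wedge$, $\vee$, ternary majority, and $\oplus$ until a combination lands outside $R$. For example, $(0,0,0,1,0,1)\vee(0,0,1,0,1,0)=(0,0,1,1,1,1)\notin R$ refutes dual-Horn; one expects analogous one-line examples using $(0,1,0,1,0,0)\wedge(1,0,1,1,0,1)$, the majority of some three elements containing both $(0,0,0,0,0,0)$ and $(1,1,1,1,1,1)$-flavored tuples, and an $\oplus$-combination involving the all-zero and all-one rows, to refute the three remaining Schaefer properties. These are purely mechanical table checks.

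The main obstacle, and the step most worth laying out carefully, is (v). Here I need to produce $x,y\in R$ and a step $x'$ from $x$ to $y$ such that $x'\notin R$ \emph{and} no step $x''\in R$ from $x'$ to $y$ exists. My plan is to pick $x,y$ of Hamming distance $4$, small enough to enumerate quickly but large enough to force the two-step axiom to do non-trivial work. Concretely I would try $x=(0,0,0,1,0,1)$ and $y=(0,0,1,0,1,0)$, which disagree exactly on coordinates $3,4,5,6$. I would then list the four possible first steps (flipping one of these coordinates), verify by lookup that all four lie outside $R$, and for a convenient choice such as $x'=(0,0,1,1,0,1)$ enumerate the three further steps towards $y$ and verify that none of them lies in $R$ either. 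This directly contradicts the two-step axiom and establishes (v).

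Combining (i)--(v), Corollary~\ref{C:complexity} immediately yields that $\SAT_C(2,R)$ is NP-hard. The only non-routine effort is the careful case analysis for (v); everything else is bounded combinatorial bookkeeping against the explicit $11$-element list of $R$, and could even be double-checked by a trivial computer enumeration.
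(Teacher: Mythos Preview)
Your proposal is correct and follows exactly the same strategy as the paper: invoke Corollary~\ref{C:complexity} and exhibit explicit witnesses showing that $R$ is neither Horn, dual-Horn, bijunctive, affine, nor a $\Delta$-matroid. The paper uses different specific tuples (e.g., $x=(1,1,1,1,1,1)$, $y=(1,0,0,0,1,0)$, $x'=(1,0,1,1,1,1)$ for the $\Delta$-matroid failure), but your choice $x=(0,0,0,1,0,1)$, $y=(0,0,1,0,1,0)$, $x'=(0,0,1,1,0,1)$ is equally valid, and the rest is, as you say, mechanical lookup.
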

\begin{proof}
  By Corollary~\ref{C:complexity}, it suffices to prove that the relation
  $R$ is not Schaefer, that is, neither (i) Horn, (ii) dual Horn, (iii)
  bijunctive, nor (iv) affine; and that, furthermore, it is (v) not a
  $\Delta$-matroid.  The proofs are somewhat tedious but straightforward.
  \begin{enumerate}
  \item[(i)] $(1,0,1,0,0,0)$ and $(1,1,0,1,1,0)$ are in $R$, but their
    conjunction $(1,0,0,0,0,0)$ is not.
  \item[(ii)] $(1,0,1,0,0,0)$ and $(1,1,0,1,1,0)$ are in $R$, but their
    disjunction $(1,1,1,1,1,0)$ is not.
  \item[(iii)] If we take $x=(1,0,1,0,0,0)$, $y=(1,1,0,1,1,0)$, and
    $z=(0,0,0,0,0,0)$, $(x \wedge y) \vee (x \wedge z) \vee (y \wedge z) =
    (1,0,0,0,0,0)$, which is not in $R$.
  \item[(iv)] If we take $x=(1,0,1,0,0,0), y=(1,1,0,1,1,0)$, and
    $z=(0,0,0,0,0,0)$, we have $x \oplus y \oplus z=(0,1,1,1,1,0)$, which
    is not in $R$.
  \item[(v)] We take $x=(1,1,1,1,1,1)$, $y=(1,0,0,0,1,0)$, and
    $x'=(1,0,1,1,1,1)$, $x$ and $y$ are in $R$ and $x'$ is a step from $x$
    to $y$, but $x'$ is not in $R$ and there does not exist any $x'' \in R$
    which is a step from $x'$ to $y$.\qedhere
  \end{enumerate}
\end{proof}

\paragraph*{The tubes.}

A \emph{tube} is the block pictured in Figure~\ref{F:tubes}; it is
comprised of six tetrahedra that all have one edge in common, and contain
each two normal disks: either a pair of one triangle and one quadrilateral,
or a pair of triangles of different types. As in Figure~\ref{F:tubes}, we
denote by $A_1$ and $A_2$ the two pairs of faces that are crossed by two
adjacent quadrilaterals.

Similarly as for the clause gadget, the gluings in a tube gadget at $A_1$
and $A_2$ can be specified by two variables, again with the convention of
Figure~\ref{F:gluings}: $0$ and $1$ respectively for non-crossing and
crossing block curves.

\begin{lemma}\label{L:relationTube}
  The singular normal surface in a tube specified by the gluing is immersed
  if and only if both variables of the tube are equal.
\end{lemma}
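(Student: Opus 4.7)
The plan is to argue by a direct case analysis on the two variables describing the gluings at $A_1$ and $A_2$, in exactly the same spirit as the proof of Lemma~\ref{L:relationGadget}.

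The first step is to observe that the local gluings at the four face-interfaces of the tube other than $A_1$ and $A_2$ are uniquely determined. Indeed, at each such interface, only a single normal arc appears on each side (contributed by a single disk in each of the two adjacent tetrahedra), so the matching equations force the local gluing to be the trivial identity bijection between these two singletons. Consequently, a global gluing of the singular normal surface in the tube is entirely determined by the pair $(v_1,v_2) \in \{0,1\}^2$, and immersibility amounts to checking that no block curve winds more than once around the central edge.

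The second step is to trace the block curves in each of the four cases. Starting from an arbitrary normal arc on the front faces and following it across the forced local gluings and the chosen gluings at $A_1$ and $A_2$, one obtains a closed block curve, just as in Figure~\ref{F:relation}; the surface is immersed if and only if each such curve closes up after a single turn around the central edge. In the two cases $v_1=v_2$, I would verify that every block curve indeed closes up after one turn, so that no branch point is created. In the two cases $v_1\neq v_2$, I would verify that at least one block curve requires two turns around the central edge before closing, producing a branch point.

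As was the case for Lemma~\ref{L:relationGadget}, there is no conceptual obstacle here: the main (and only) difficulty is the combinatorial bookkeeping of following the arcs through the six tetrahedra. The verification is a short picture-chase analogous to Figure~\ref{F:relation}, and one can further halve the amount of case-checking by invoking the obvious symmetry of the tube that simultaneously swaps the two possible gluings at $A_1$ and $A_2$.
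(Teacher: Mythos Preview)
Your proposal is correct and follows essentially the same approach as the paper: the paper's proof is the single sentence ``This is immediate by checking the four possible assignments of the variables,'' and your plan spells out exactly this case analysis. One small imprecision: at the four interfaces other than $A_1$ and $A_2$ there may be two normal arcs on each side rather than one, but they are of different arc types (one near each endpoint of the central edge), so the local gluing is still forced and your conclusion stands.
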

\begin{proof}
  This is immediate by checking the four possible assignments of the
  variables.
\end{proof}

\begin{figure}[htb]
\centering
\def\svgwidth{5.5cm}
\executeiffilenewer{fig/tubes.svg}{fig/tubes.pdf}%
{inkscape -z -D --file=fig/tubes.svg %
--export-pdf=fig/tubes.pdf --export-latex}%
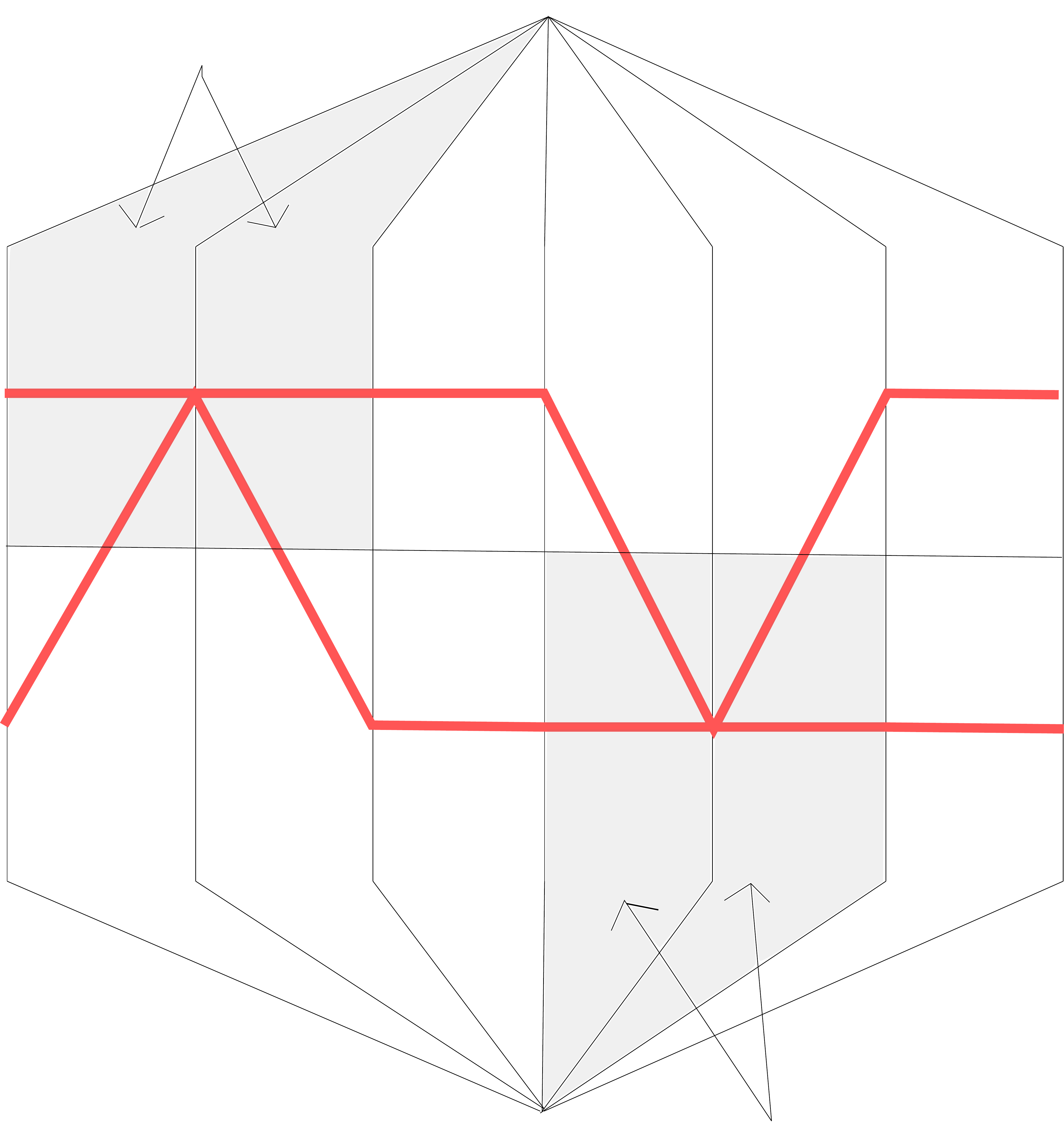%

\caption{A tube, where the shaded regions denote the locations where the
  tube will be connected to clauses.}
\label{F:tubes}
\end{figure}

Consider a variable~$v$ appearing exactly twice in~$\Phi$, in clauses $C_1$
and~$C_2$ (it may be that $C_1=C_2$).  Let $G_1$ and~$G_2$ be the copies of
the clause gadget corresponding to $C_1$ and~$C_2$, respectively; let $B_1$
be the pair of faces in~$G_1$ corresponding to the occurrence of the
variable~$v$ in~$G_1$, and similarly let $B_2$ be the other pair of faces
in~$G_2$ corresponding to the occurrence of the variable~$v$ in~$G_2$.  We
create a tube~$T_v$, and we glue the pair of faces~$A_1$ to~$B_1$
and~$A_2$ to~$B_2$.

\paragraph*{The constants.}

A constant gadget is one of the blocks pictured in Figure~\ref{F:constants}
(since each gadget only consists of one tetrahedron, we did not adopt the
schematic representation in this drawing). The gadget $CG_0$ for the
constant $0$ consists of one tetrahedron containing two triangles, while
the gadget $CG_1$ for the constant $1$ consists of one tetrahedron
containing two crossing quadrilaterals.  In both cases, the pair of
faces~$A$, at which the constant will be connected to a clause gadget, is
made of the two front faces in Figure~\ref{F:constants}.

\begin{figure}[htb]
\centering
\def\svgwidth{8cm}
\executeiffilenewer{fig/constants.svg}{fig/constants.pdf}%
{inkscape -z -D --file=fig/constants.svg %
--export-pdf=fig/constants.pdf --export-latex}%
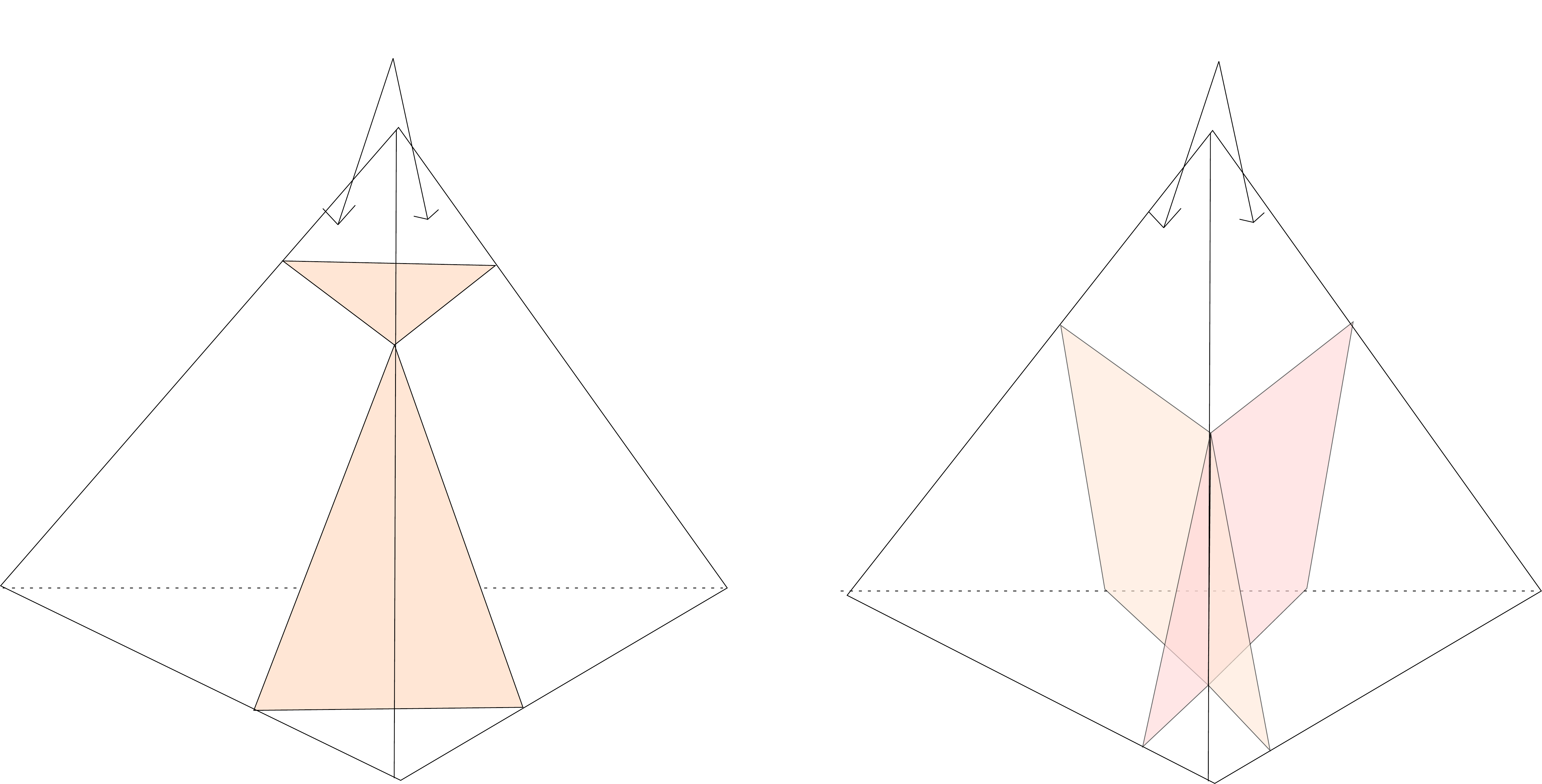%

\caption{Left: The gadget~$CG_0$.  Right: The gadget~$CG_1$.  In both
  cases, the two front faces constitute the pair~$A$.  The trace of the
  singular normal surface on~$A$ is the same in both cases.}
\label{F:constants}
\end{figure}

Whenever a constant~$0$ appears in~$\Phi$, we create a copy of the constant
gadget~$CG_0$, and attach the corresponding pair of faces in the clause
gadget to the pair~$A$ in the constant gadget.  The same holds for the
constant~$1$, with $CG_1$ instead of~$CG_0$.

\subsection{Proof of the reduction}

We now have all the tools to prove Theorem~\ref{T:main}. Starting with a
formula $\Phi$, we build a triangulation $T$ and normal coordinates $N$
with the clause gadgets, the tubes, and the constant gadgets. We first prove
that this triangulation forms a 3-manifold with boundary.

\begin{proposition}\label{P:3manifold}
The triangulation $T$ corresponding to a formula $\Phi$ is a 3-manifold
with boundary.
\end{proposition}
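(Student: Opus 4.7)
The plan is to apply Lemma~\ref{L:manifold}, which reduces the problem to checking two local conditions on vertices and edges. First, I will observe that each individual gadget is already a 3-manifold with boundary before any gluings: a constant gadget is a single tetrahedron, which is a 3-ball; a clause gadget or tube is a block of six tetrahedra arranged cyclically around a common central edge, which is homeomorphic to a 3-ball whose boundary 2-sphere has twelve triangular faces organized into six ``pairs'' each sharing an edge that crosses the central edge. Hence each gadget on its own trivially satisfies both conditions of Lemma~\ref{L:manifold}.

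Second, I will verify that the inter-gadget face-pair gluings preserve both conditions. For condition (2), I observe that no edge interior to any gadget (including the central edges of blocks) is affected by any gluing, so each such edge retains a well-defined orientation. The edges lying on glued boundary faces get identified in an orientation-consistent way, which can be checked directly from the combinatorial bijections prescribed by Figures~\ref{F:clausegadget}, \ref{F:tubes}, and~\ref{F:constants}: at each gluing, the two faces of a pair are identified with the two faces of a pair in the opposite gadget via maps that never fold an edge onto itself with reversed orientation. For condition (1), the link of each vertex of $T$ is obtained by gluing the local vertex links within each gadget (each of which is a disk or a sphere) along arcs; these gluings produce a disk for vertices remaining on the boundary of~$T$ and a sphere for vertices that become interior after all identifications.

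The main obstacle will be the combinatorial bookkeeping in condition (1) when several gadgets meet at the same vertex, since a single vertex can simultaneously belong to multiple clause gadgets, tubes, and constant gadgets. However, because each variable of~$\Phi$ appears at most twice, the identification pattern is sufficiently local: the vertices of a pair of faces in a clause gadget are only glued to the vertices of a single tube or constant gadget, and traced step by step the resulting vertex link in each case is seen to be a disk (boundary vertex) or a sphere (interior vertex). With both conditions of Lemma~\ref{L:manifold} verified, the proposition follows.
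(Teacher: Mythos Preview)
Your proposal is correct and follows essentially the same approach as the paper: both verify the two conditions of Lemma~\ref{L:manifold}, handling the vertex-link check by examining how several gadgets can meet at a common vertex (the paper does this tersely via Figure~\ref{F:manifold}, focusing on the worst case of a vertex shared with three tubes). One minor remark: the paper in fact shows every vertex is a boundary vertex (half-space neighborhood), and your invocation of the ``each variable appears at most twice'' hypothesis as the source of locality is slightly off---the locality comes from each face pair being glued to at most one other gadget, independent of variable multiplicity---but this does not affect the correctness of your argument.
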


\begin{proof}[Proof of Proposition~\ref{P:3manifold}]
  First, we show that every vertex $v$ of~$T$ has a neighborhood
  homeomorphic to the closed half-space. The vertex $v$ is adjacent to a
  clause gadget $C$ as well as between zero and three tubes or constant
  gadgets. We focus on the case with three tubes $T_1, T_2$ and $T_3$, the other ones being
  handled similarly. Since gadgets are glued along discs with disjoint
  interiors, and the tubes are locally disjoint except at their
  intersection with $C$, the neighborhood around $v$ is the one pictured in
  Figure~\ref{F:manifold}. One readily sees that this neighborhood is
  homeomorphic to a half-space.

\begin{figure}[htb]
\centering
\def\svgwidth{5cm}
\executeiffilenewer{fig/manifold.svg}{fig/manifold.pdf}%
{inkscape -z -D --file=fig/manifold.svg %
--export-pdf=fig/manifold.pdf --export-latex}%
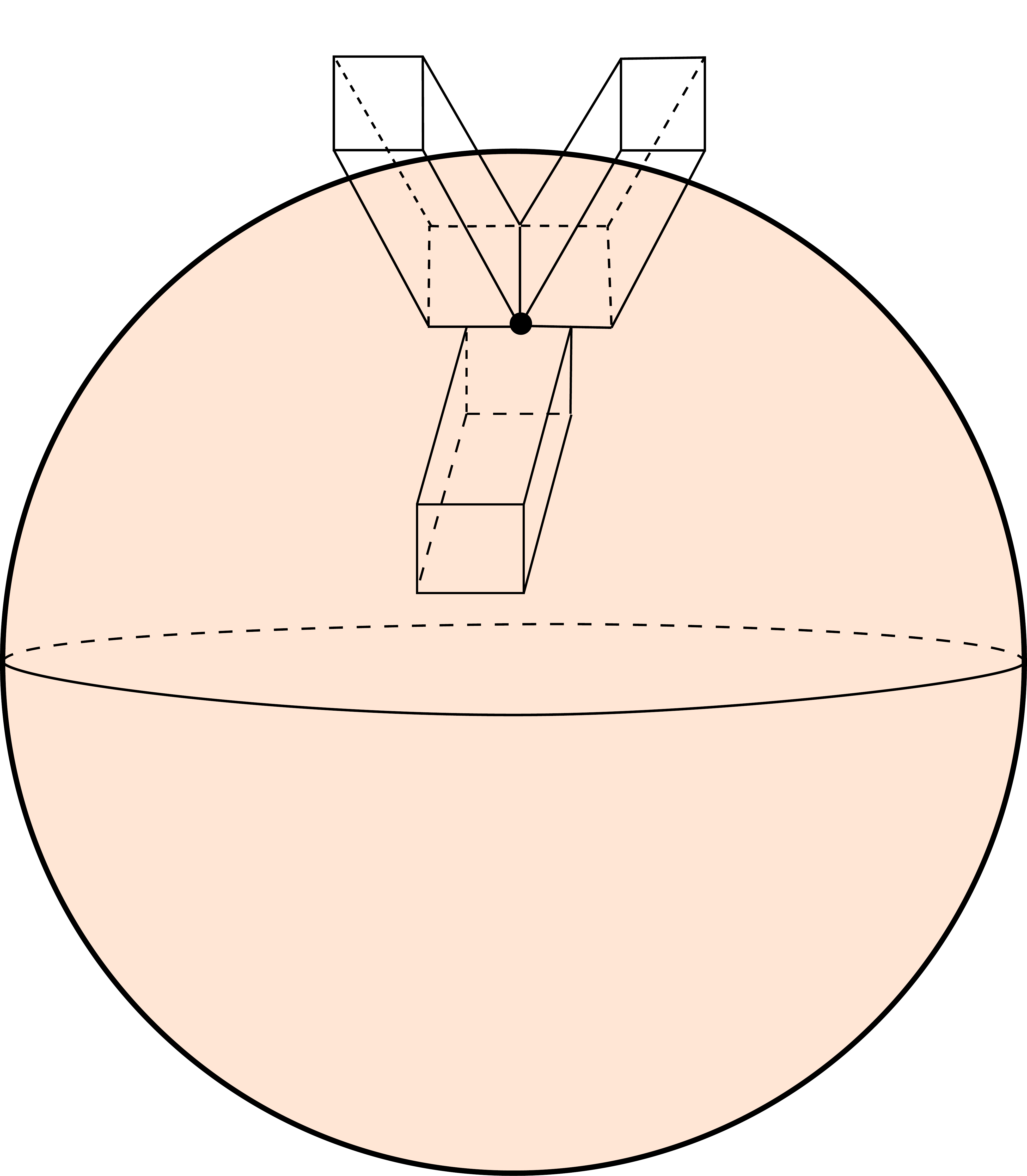%

\caption{The local picture around a vertex $v$ of the triangulation $T$.}
\label{F:manifold}
\end{figure}

  Moreover, by construction, no edge is identified to itself in reverse.
  Lemma~\ref{L:manifold} concludes the proof.
\end{proof}

\begin{proposition}\label{P:mainprop}
  The normal coordinates $N$ correspond to an immersible surface if and
  only if the formula $\Phi$ is satisfiable.
\end{proposition}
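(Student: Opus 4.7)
The plan is to prove both directions of the equivalence by tracking how gluing choices encode variable values. First I would formalize the correspondence: every pair of faces inside a clause gadget corresponds to either an occurrence of a variable or a constant of $\Phi$, and the choice of local gluing at such a pair is encoded by a bit via the convention of Figure~\ref{F:gluings}. Tubes link the two occurrences of each variable appearing twice, while constant gadgets are attached to the pairs encoding constants. By direct inspection of $CG_0$ (which contains no quadrilateral) and $CG_1$ (which contains crossing quadrilaterals), the pair of faces~$A$ on the constant gadget is forced to value~$0$ and to value~$1$ respectively, matching the two gluings of Figure~\ref{F:gluings}.

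For the ``only if'' direction, suppose $N$ admits an immersed global gluing $g$. I would extract an assignment $f$ by letting $f(v)$ be the value under $g$ at one of the two occurrences of $v$. Lemma~\ref{L:relationTube} ensures $f$ is well defined (otherwise the central edge of the tube for $v$ would carry a branch point), and for each clause Lemma~\ref{L:relationGadget} forces the $6$-tuple of values read at the central edge of the corresponding clause gadget to lie in $R$; combined with the inspection above showing that each constant gadget pins its attached pair to the correct Boolean value, this shows that $f$ satisfies every clause of $\Phi$. Conversely, given a satisfying assignment $f$, I would build a global gluing by choosing, at each variable pair, the local gluing of Figure~\ref{F:gluings} dictated by $f(v)$, and at each constant pair, the gluing forced by the attached constant gadget. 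The central edges of clause gadgets, tubes, and constant gadgets then carry no branch point by Lemma~\ref{L:relationGadget}, Lemma~\ref{L:relationTube}, and direct inspection, respectively.

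The main obstacle is verifying that no branch point appears at non-boundary edges other than the central edges of the three gadget types: a branch point can occur at \emph{any} non-boundary edge of $T$, not just the ones singled out by the lemmas. However, by the construction each such peripheral edge is incident to only a handful of tetrahedra coming from one or two gadgets, so only a bounded number of normal disks meet it; the block curves around it are correspondingly short and, regardless of the global gluing, cannot wind twice around the edge. A case analysis of the peripheral edges of the clause gadget, the tube, and the two constant gadgets, together with the forcing argument for constant pairs, constitutes the technical core of the proof. Once this is done, combining the two directions of the argument establishes the stated equivalence.
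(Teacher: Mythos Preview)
Your overall strategy coincides with the paper's: treat the central edges of the clause and tube gadgets via Lemmas~\ref{L:relationGadget} and~\ref{L:relationTube}, and then deal separately with the remaining non-boundary edges. The gap is in this last step. You assert that at every ``peripheral'' non-boundary edge the block curves ``regardless of the global gluing, cannot wind twice around the edge.'' That is false for precisely the edges that matter most: the edge~$e$ shared by the two faces of a glued pair (where a tube or constant gadget is attached to a clause gadget). Around such an edge three or four tetrahedra meet, and whether a branch point appears there \emph{does} depend on the gluing: it is absent exactly when the clause-side variable value and the tube-side (respectively constant-side) value coincide. The paper isolates this fact as an explicit ``observation'' at the very start of its proof and uses it in both directions.

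Concretely, in your forward direction you invoke Lemma~\ref{L:relationTube} to conclude that $f$ is well defined. But Lemma~\ref{L:relationTube} only equates the two variable values \emph{internal to the tube}, at $A_1$ and~$A_2$; it says nothing about the clause gadget's variables $x_i$, which are local gluings at the interior faces of the clause block, not at the glued boundary pair. The missing link---that the clause's $x_i$ equals the tube's value at the attached end---comes from the absence of a branch point at the interface edge~$e$, i.e., from the observation you have dismissed. The same issue arises for constants: what ``pins'' the clause's $x_i$ to $0$ or~$1$ is again the interface edge, not just an inspection of $CG_0$ or $CG_1$ in isolation. In the converse direction, your construction must also specify the tube's internal gluings and the gluings at the glued faces, and then the interface edges must be checked; here the required condition (equal values on both sides) holds by design, but it is a condition, not an automatic fact. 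So the ``case analysis'' you anticipate will not establish the claim you state; it will instead reveal exactly the matching condition the paper records, and you should build your argument around it.
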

\begin{proof}[Proof of Proposition~\ref{P:mainprop}]
  We will use the following observation.  Consider an edge~$e$ between the
  two faces forming a pair of faces in a gadget.  The block around~$e$ is
  made of three or four tetrahedra, depending on whether a constant gadget
  or a tube is glued to the pair on the clause gadget.  Then the singular
  surface has no branch point on~$e$ if and only if the values of the
  variables on the two pair of faces are identical, and, in case one of the
  gadget is a constant gadget, the variable takes the value specified by
  that gadget.

  Assume first that the normal coordinates $N$ correspond to an immersible
  surface. The gluings corresponding to variables appearing exactly once in
  $\Phi$ define a partial assignment of the variables. For the ones
  appearing twice, Lemma~\ref{L:relationTube} implies that the two
  variables in the same tube are equal.  Since they are glued to the pairs
  of faces representing their two occurrences in the clause gadgets, and by
  the observation, this defines a consistent assignment of the variables;
  moreover, again by the observation, the variables on the pair of faces on
  the clause gadgets corresponding to the constants take the appropriate
  values.  Finally, since no branch point arises in a clause gadget,
  Lemma~\ref{L:relationGadget} implies that each clause is satisfied by the
  assignment of variables.

  Conversely, assume that $\Phi$ is satisfiable.  The satisfying assignment
  naturally defines the values of the variables on each pair of faces.  We
  prove that these gluing rules do not create branch points. Branch points
  only occur on non-boundary edges of the triangulation; each such edge is
  either an edge in a clause gadget, an edge in a tube gadget, or an edge
  at the interface between two pairs of faces.  No such edge contains
  contains a branch point, by Lemmas \ref{L:relationGadget}
  and~\ref{L:relationTube} and the observation, respectively.  This
  concludes the proof.
\end{proof}

\section{Variants}\label{S:variants}

We now establish a few corollaries, settling the complexity of a few
variants of the \textsc{Immersibility} problem.

The first one settles the complexity of the problem
$k$-\textsc{Bounded-Immersibility}, which is the one of testing immersibility
when all the normal coordinates are bounded by $k \geq 1$.

\begin{corollary}\label{C:NPcomplete}
  The problem $k$-\textsc{Bounded-Immersibility} is NP-complete.
\end{corollary}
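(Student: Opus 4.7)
The plan is to establish NP-membership and NP-hardness separately; both halves are essentially consequences of the apparatus already developed, and neither is difficult once the right observation is made.

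For membership in NP, the key point is that bounding coordinates by $k$ tames the size of a global gluing. With each tetrahedron carrying at most $7k$ normal disks, the whole triangulation carries $O(tk)$ normal disks, which is polynomial in the input size. The certificate will be a global gluing, described face by face: for each normal arc type at each non-boundary face, give a bijection between the at most $k$ normal disks on each side. This has total description size $O(tk\log k)$, hence polynomial. Given such a certificate, one verifies it by inspecting every non-boundary edge $e$, tracing the block curves formed by the local gluings around $e$, and accepting if and only if no block curve winds more than once around $e$ (i.e.\ self-intersects). This check runs in time linear in the number of normal disks.

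For NP-hardness, I would simply recycle the reduction used in the proof of Theorem~\ref{T:main}, observing that it already produces instances with bounded coordinates. Every tetrahedron appearing in a clause gadget contains three normal disks (two triangles and a quadrilateral), every tetrahedron appearing in a tube contains two normal disks, and every constant gadget is a single tetrahedron with two normal disks. Therefore every normal coordinate in the instance produced by the reduction is bounded by an absolute constant $c$. The reduction from $\SAT_C(2,R)$ thus already maps into $k$-\textsc{Bounded-Immersibility} for every $k \ge c$, and Proposition~\ref{P:propSchaefer} (via Corollary~\ref{C:complexity}) yields NP-hardness.

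The main obstacle — and the very reason the bounded variant behaves better than the general one — is justifying that the global gluing qualifies as a short certificate. For unbounded coordinates, a single local bijection already has $k!$ choices and hence description size $\Theta(k\log k)$ which is exponential when $k$ is encoded in binary; under the bounded-coordinate hypothesis this blow-up disappears. Once this is pinned down, NP-completeness is immediate: NP-membership from the short certificate and linear-time verification, NP-hardness by inheritance from Theorem~\ref{T:main}.
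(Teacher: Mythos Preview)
Your proposal is correct and follows essentially the same approach as the paper: NP-membership via the global gluing as a polynomial-size certificate with linear-time verification, and NP-hardness by observing that the reduction of Theorem~\ref{T:main} already produces instances with bounded coordinates. One small sharpening: the paper notes that every normal coordinate in the reduction is in fact bounded by~$1$ (the two triangles in each clause-gadget tetrahedron are of different types, as are the two disks in each tube tetrahedron and in each constant gadget), so your constant~$c$ equals~$1$ and NP-hardness holds for every $k\ge 1$, not merely for $k\ge c$.
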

(Recall that a problem is \emph{NP-complete} if it is both in NP and
NP-hard.)
\begin{proof}[Proof of Corollary~\ref{C:NPcomplete}]
  The NP-hardness reduction for Theorem~\ref{T:main} only involves normal
  coordinates bounded by $1$, so $k$-\textsc{Bounded-Immersibility} is
  NP-hard as well.  

  The certificate we use to prove the membership in NP is the global gluing
  for an immersed surface. At an interface, the local gluing can be
  described by a permutation in $S_k$, which is of finite size since $k$ is
  fixed. Since the number of interfaces is bounded by the size of the
  triangulation, the certificate has polynomial size. As mentioned in the
  preliminaries, when one is provided with normal coordinates and a global
  gluing, one can test in linear time whether the corresponding singular
  normal surface is immersed.
\end{proof}

The gadget we use to show the NP-hardness of \textsc{Immersibility} only
outputs a 3-manifold with boundary, but can be easily tweaked to handle the
problem with boundaryless manifolds, which we name
\textsc{Boundaryless-Immersibility}.

\begin{corollary}\label{C:boundaryless}
The problem \textsc{Boundaryless-Immersibility} is NP-hard.
\end{corollary}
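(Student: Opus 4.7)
The plan is to reduce \textsc{Immersibility} to \textsc{Boundaryless-Immersibility} by doubling the manifold. Given an instance $(T, N)$ produced by the reduction of Theorem~\ref{T:main}, form $T'$ by taking a disjoint mirror copy $\overline{T}$ of $T$ and identifying each boundary face $f$ of $T$ with its mirror~$\overline{f}$; define the normal coordinates $N'$ on~$T'$ by assigning to every mirror tetrahedron the same coordinates as its original. The construction is clearly polynomial in the size of $(T,N)$.

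First I would verify that $T'$ is a closed 3-manifold using Lemma~\ref{L:manifold}: interior vertices of~$T$ keep their ball neighborhoods; boundary vertices of~$T$ have half-ball neighborhoods by Proposition~\ref{P:3manifold}, and these combine with their mirror halves across the newly interior disks to form genuine ball neighborhoods; and since each boundary edge of~$T$ is identified with a distinct edge of~$\overline{T}$, no edge of~$T'$ is identified to itself in reverse. The matching equations for $N'$ hold on interior faces of $T$ and $\overline{T}$ as they did for~$N$, and they hold on the newly interior faces (formerly in $\partial T$) because the two incident tetrahedra carry identical coordinates by the mirror symmetry.

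The heart of the argument is the equivalence: $N'$ is immersible in $T'$ if and only if $N$ is immersible in~$T$. The forward direction mirrors any branch-point-free gluing of $N$ to $\overline{T}$ and joins matching arcs across the new interior faces; the converse restricts a branch-point-free gluing of $N'$ to the interior faces of~$T$, and every interior edge of~$T$ remains interior in~$T'$ and so inherits the absence of branch points. The subtle step---which I expect to be the main obstacle---is to verify that doubling does not introduce branch points at edges of~$T'$ that were boundary edges of~$T$. Around such an edge the tetrahedra of~$T$ form a fan rather than a full cycle, so every block curve in the original is actually a path from one end of the fan to the other; doubling concatenates each such path with its mirror across the two new interior faces, yielding a closed loop that traverses the resulting full cycle exactly once, hence has winding number one and therefore witnesses no branch point.
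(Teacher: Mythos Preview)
Your proposal is correct and follows the same doubling strategy as the paper; the paper's own proof simply asserts that the double is a 3-manifold and that immersibility is preserved, calling both facts ``straightforward to check'', whereas you actually carry out those checks (in particular the analysis of former boundary edges, which the paper omits entirely).
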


\begin{proof}[Proof of Corollary~\ref{C:boundaryless}]
  The triangulation $T_{\Phi}$ with normal coordinates $N$ obtained by
  the reduction in Theorem~\ref{T:main} can become boundaryless by
  \emph{doubling} it, i.e., by taking $T_{\Phi} \bigsqcup T_{\Phi}$ and
  gluing one onto the other with the identity homeomorphism on their
  boundaries. It is straightforward to check that the resulting space is a
  $3$-manifold, and that $N$ is immersible in
  $T_{\Phi}$ if and only if $N \bigsqcup N$ is immersible in $T_{\Phi} \bigsqcup
  T_{\Phi}$.
\end{proof}

As we mentioned in the preliminaries, the triangulations we consider in
this chapter may not be simplicial complexes, and indeed the gadgets we use
in our reduction may display some slightly pathological behavior. For
example, if a tube links the variables $1$ and $5$ of a clause gadget, the
central edge of the tube has its endpoints identified. This can be avoided
by replacing each tube gadget by two copies of itself, glued to each other
along the pairs $A_1$ and $A_2$. These ``double tubes'' are glued to the
clause gadgets as usual ones. Then it can be checked that the resulting
triangulation is a simplicial complex, and this shows that
\textsc{Immersibility} is also NP-hard when the triangulation is a
simplicial complex.

As a last remark, we note that since the triangulation obtained by the
reduction in Theorem~\ref{T:main} consists of balls linked by tubes, it can
be embedded in $\R^3$. This shows that \textsc{Immersibility} is NP-hard
even when restricted to submanifolds of $\R^3$, which is for example the
case of knot complements.

\section{Testing local immersibility}\label{S:local}

In this section, we provide a polynomial-time algorithm to test whether
normal coordinates are immersible when the triangulation is just a block,
i.e., a collection of tetrahedra glued around a single edge. When applied
to every edge of a more complicated triangulation, this provides a
\emph{local} test for immersibility, in that it detects local
obstructions but not more involved ones, as pictured in
Figure~\ref{F:badlocal}.

\begin{figure}
\centering
\def\svgwidth{12cm}
\executeiffilenewer{fig/badlocal.svg}{fig/badlocal.pdf}%
{inkscape -z -D --file=fig/badlocal.svg %
--export-pdf=fig/badlocal.pdf --export-latex}%
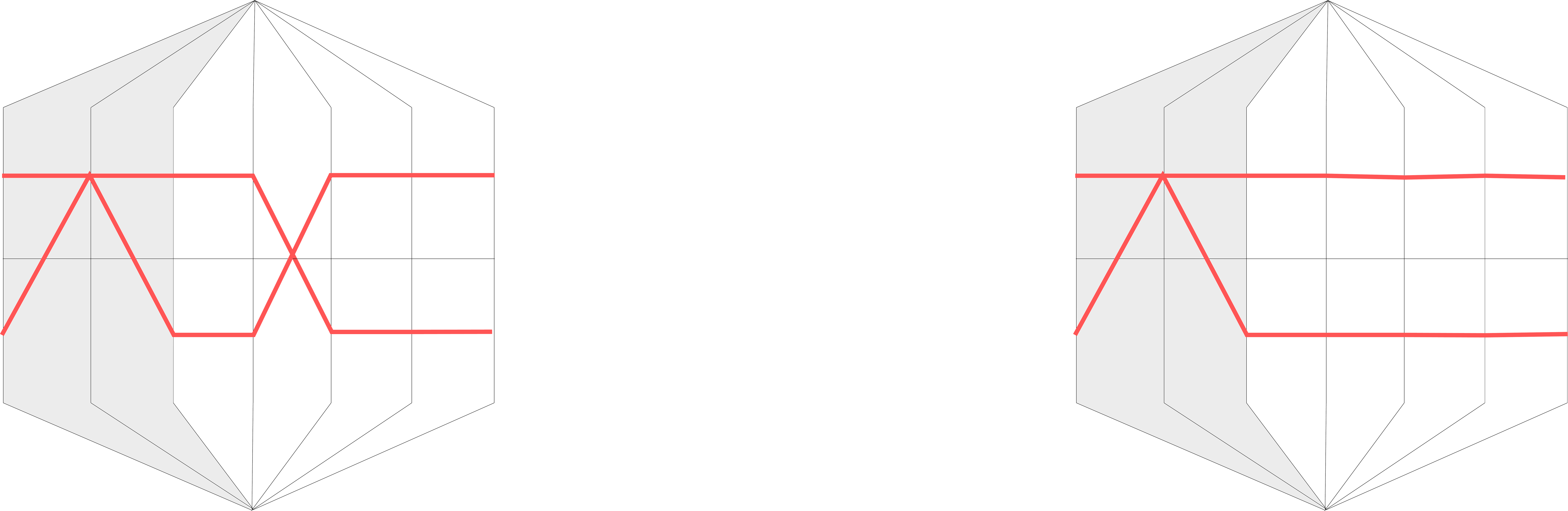%

\caption{When these two blocks are made into a single triangulation by gluing
them along the shaded regions, the normal coordinates are locally
immersible around every edge, but not globally immersible.}
\label{F:badlocal}
\end{figure}

For the rest of this section, we denote by $T$ a triangulation comprised of
a single block, and by $N$ some normal coordinates for this triangulation.
The algorithm works by reducing the problem to a maximum flow computation
in a directed graph with integer capacities.  In this context, given a
directed graph~$G$ where each edge~$e$ bears a nonnegative integer
\emph{capacity}~$c(e)$, and two vertices $s$ and~$t$, a flow can be seen as
a set of paths in~$G$ from $s$ to~$t$ such that each edge~$e$ is used at
most $c(e)$ times by the paths; a maximum flow is a flow with a maximum
number of paths.  It is well-known that maximum flows in directed graphs
can be computed in small polynomial time~\cite[Ch.~10]{s-cope-03}.

In our problem, the construction of the directed graph~$G$ corresponding to
a block and to given normal coordinates is pictured in
Figure~\ref{F:blocktograph}: Every triangle type or quadrilateral type is
associated to a directed edge, and the corresponding normal coordinate is
carried over as the capacity of that edge.  We denote by $s_1$ and $s_2$
the extremal vertices of this graph on one side of the block, and $t_1$ and
$t_2$ the vertices on the other side. Note that in the triangulation, both
sides of the block are identified, and therefore for $i=1,2$, $s_i$
corresponds to the same vertex as $t_i$, which we denote by $x_1$
and~$x_2$. The algorithm then simply computes the maximum flow between
$s_1$ and $t_1$, and checks whether it is equal to the sum of the
capacities of the edges going out of $s_1$.

\begin{figure}
\centering
\def\svgwidth{12cm}
\executeiffilenewer{fig/blocktograph.svg}{fig/blocktograph.pdf}%
{inkscape -z -D --file=fig/blocktograph.svg %
--export-pdf=fig/blocktograph.pdf --export-latex}%
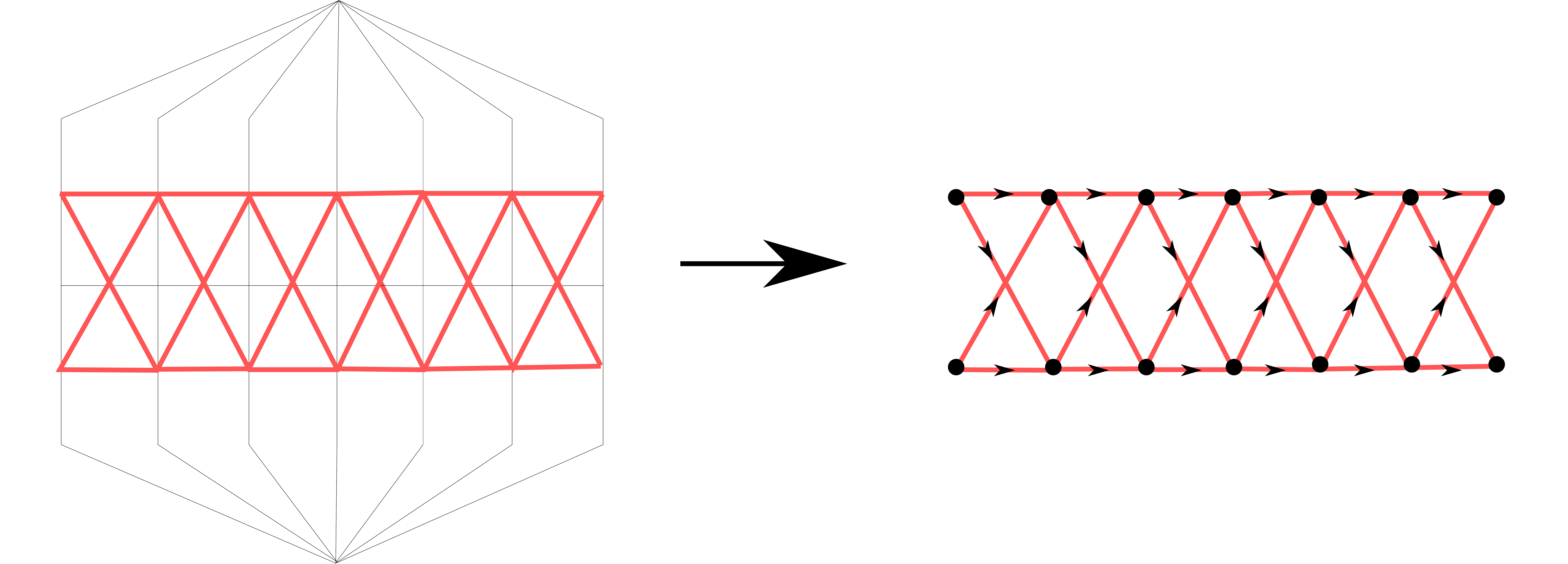%

\caption{The directed weighted graph $G$ is obtained by choosing an
  orientation around the block, adding a single directed edge for every
  normal disk type, and setting the capacity of an edge to be equal to the
  corresponding normal coordinate.}
\label{F:blocktograph}
\end{figure}

The correctness of this algorithm is warranted by the following lemma.

\begin{lemma}\label{L:local-immersibility}
  The normal coordinates $N$ are immersible if and only if the maximum flow
  between $s_1$ and $t_1$ equals the sum of the capacities of the edges
  leaving $s_1$.
\end{lemma}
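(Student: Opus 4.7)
The plan is to reduce immersibility in a block to the existence of a decomposition of the edges of the graph~$G$ into directed $s_1$-$t_1$ and $s_2$-$t_2$ paths, and then to relate such decompositions to the max-flow condition stated in the lemma.

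First I would observe that the only non-boundary edge of the block is~$e$, so immersibility amounts to finding a global gluing whose block curves around~$e$ all wind exactly once. A global gluing is the data of, at each interface face and at each of the two arc types crossing~$e$, a bijection between the arcs arriving on either side. Translating through the construction of~$G$, such a gluing is equivalent to a decomposition of the edges of~$G$ (where each edge is taken with multiplicity equal to its capacity) into closed walks on the cylinder obtained from~$G$ by identifying $s_i$ with~$t_i$; these walks are exactly the block curves. Since~$G$ is a DAG whose edges advance one tetrahedron to the right, each arc of a block curve corresponds to one edge in a single copy of~$G$, so a block curve of winding number~$k$ lifts to a directed path of length $kn$ in the $k$-fold unrolled graph. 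Winding-one curves therefore correspond precisely to single directed paths in~$G$, and such a path must go either from~$s_1$ to~$t_1$ or from~$s_2$ to~$t_2$. Hence $N$ is immersible if and only if the edges of~$G$ decompose into $s_1$-$t_1$ and $s_2$-$t_2$ directed paths.

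For the forward direction, such a decomposition provides an integral $s_1$-$t_1$ flow saturating the out-capacity $C_1$ of~$s_1$, since every arc leaving~$s_1$ is the first arc of some $s_1$-$t_1$ path; hence the max flow equals $C_1$. For the converse, assume that the max $s_1$-$t_1$ flow equals~$C_1$. Integrality of max flow in integer-capacitated networks yields $C_1$ integral $s_1$-$t_1$ paths saturating the out-capacity of~$s_1$. Subtracting this flow, the residual graph satisfies flow conservation at every internal vertex (the matching equations being exactly these conservation laws), has zero residual capacity at~$s_1$ and~$t_1$, and carries $C_2$ units of net flow from~$s_2$ to~$t_2$; standard flow decomposition produces $C_2$ integral $s_2$-$t_2$ paths. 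A short count using the matching equations shows that these $C_1+C_2$ paths together exhaust the capacity of every edge, so by the correspondence above we obtain an immersible global gluing.

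The delicate step I expect is the correspondence between winding-one block curves and directed paths in~$G$: both ruling out block curves with higher winding (by the DAG property of~$G$ and the unrolling argument) and checking that an arbitrary path decomposition defines a consistent bijection at each interface and each arc type. Both points are clean consequences of viewing the matching equations as flow conservation in~$G$, but they are where one has to unwind the geometry of the block most carefully.
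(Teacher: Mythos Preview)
Your proposal is correct and follows essentially the same approach as the paper: both argue that winding-one block curves correspond to $s_i$--$t_i$ paths, use the max flow to extract the $s_1$--$t_1$ paths, and then observe that the residual capacities decompose into $s_2$--$t_2$ paths. You are more explicit than the paper about the DAG/unrolling argument and about identifying the matching equations with flow conservation, but the underlying logic is the same.
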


\begin{proof}[Proof of Lemma~\ref{L:local-immersibility}]
  For the first implication, if the normal coordinates are immersible,
  there exists a gluing inducing no branch point. The associated normal
  surface is therefore a union of disks winding exactly once around the
  central edge. Every such disk corresponds in $G$ to a path going from
  $s_1$ to~$t_1$ or from $s_2$ to~$t_2$. Therefore, the maximum flow going
  from $s_1$ to $t_1$ is the sum of the capacities going out of $s_1$.

  In the other direction, such a maximum flow naturally defines a gluing
  corresponding to an immersed surface. Indeed, the flow can be seen as a
  set of paths in~$G$ from $s_1$ to~$t_1$, which in turn correspond to
  cycles when identifying back $s_1$ and~$t_1$.  This gives a partial
  gluing of the normal disks, which yields a (not necessarily disjoint)
  union of embedded disks passing through~$x_1$.  It is also easy to see
  that, if we decrease the capacities of each edge of~$G$ by the number of
  times it is used by the flow, then, in this resulting \emph{residual
    graph}, there exists a maximum flow from $s_2$ to~$t_2$ that uses all
  residual capacities.  (This could also be checked using the same
  algorithm.)  This flow similarly corresponds to normal disks.  All in
  all, we obtain an immersed surface whose normal coordinates are precisely
  the capacities of the original graph, which completes the proof.
\end{proof}

It is an intriguing problem to extend this algorithm to deal with more
complicated triangulations. With some work, we can for example obtain a
polynomial-time algorithm to test immersibility for the triangulation
pictured in Figure~\ref{F:badlocal}. Finding a general polynomial-time
algorithm is ruled out by our main Theorem~\ref{T:main} though.  To
conclude, it would be interesting to determine whether the immersibility
problem is \emph{fixed-parameter tractable}~\cite{f-pcndr-01} in the size
of the triangulation~$T$; or, in other words, whether there exists an
algorithm with running time $f(|T|).\text{poly}(n)$ for this purpose, where
$f(|T|)$ is an arbitrary function depending on the number of simplices
of~$T$ and $\text{poly}(n)$ is an arbitrary polynomial.

\bibliographystyle{siam}
\bibliography{bibexport,arnaud}
%%%%%%%%%%%%%%%%%%%%%%%%%%%%%%%%%%%%%%%%%%%%%%%%%%%%%%%%%%%%%%%%%%%%%%%%%%
%%%%%%%%%%%%%%%%%%%%%%%%%%%%%%%%%%%%%%%%%%%%%%%%%%%%%%%%%%%%%%%%%%%%%%%%%%
%%%%%%%%%%%%%%%%%%%%%%%%%%%%%%%%%%%%%%%%%%%%%%%%%%%%%%%%%%%%%%%%%%%%%%%%%%
\end{document}